\renewcommand{\a}{\alpha}
\renewcommand{\b}{\beta}
\renewcommand{\d}{\delta}
\newcommand{\g}{\gamma}
\newcommand{\G}{\Gamma}
\renewcommand{\l}{\lambda}
\renewcommand{\L}{\Lambda}
\newcommand{\s}{\sigma}
\renewcommand{\o}{\omega}
\renewcommand{\t}{\tau}
\newcommand{\cA}{{\mathcal A}}
\newcommand{\cB}{{\mathcal B}}
\newcommand{\cD}{{\mathcal D}}
\newcommand{\cH}{{\mathcal H}}
\newcommand{\cL}{{\mathcal L}}
\newcommand{\bC}{{\mathbb C}}
\newcommand{\rtr}{{\mathrm{tr}}}
\newcommand{\Lat}{{Latr\'emolie\`re}}
\newtheorem{thm}{Theorem}[section]
\newtheorem{cor}[thm]{Corollary} 
\newtheorem{prop}[thm]{Proposition} 
\theoremstyle{definition}   
\newtheorem{defn}[thm]{Definition} 
\newtheorem{notation}[thm]{Notation} 
\numberwithin{equation}{section}
\title
[Matricial bridges]
{Matricial bridges for ``Matrix algebras converge to the sphere''  \\
}
\author{Marc A. Rieffel}
\address{Department of Mathematics \\
University of California \\ Berkeley, CA 94720-3840}
\email{rieffel@math.berkeley.edu}
\thanks{The research reported here was
supported in part by National Science Foundation grant DMS-1066368.
}
\subjclass[2010]{Primary 46L87; 
Secondary 53C23, 58B34, 81R15, 81R30
}
\keywords{quantum metric space, Gromov-Hausdorff distance, bridge,
matrix seminorm, coadjoint orbit, coherent states, Berezin
symbol
}
\begin{document}

\begin{abstract}
In the high-energy quantum-physics literature one finds 
statements such as ``matrix algebras converge to the sphere''.
Earlier I provided a general setting for understanding such
statements, in which the matrix algebras are viewed as
quantum metric spaces, and convergence is with respect
to a quantum Gromov-Hausdorff-type distance. In the present
paper, as preparation of discussing similar statements for
convergence of ``vector bundles'' over matrix algebras to
vector bundles over spaces, we introduce and study
suitable matrix-norms for matrix algebras and spaces.
Very recently \Lat \ introduced an improved
quantum Gromov-Hausdorff-type distance between quantum 
metric spaces. We use it throughout this paper.
To facilitate the calculations we introduce and develop 
a general notion of
``bridges with conditional expectations''.
\end{abstract}

\maketitle
\allowdisplaybreaks


\section{Introduction}
In several earlier papers \cite{R7, R25, R21} I showed how to give 
a precise meaning to statements in the literature of high-energy
physics and string theory of the kind ``matrix algebras 
converge to the sphere''. (See the references to the quantum physics
literature given in 
\cite{R7, R17, Stn2, ChZ, DiG, AHI, AcD}.) I did this by introducing
and developing a concept of ``compact quantum metric spaces'', and a corresponding quantum Gromov-Hausdorff-type distance 
between them. The compact quantum spaces are unital C*-algebras,
and the metric data is given by putting on the
algebras seminorms that play the role of the usual Lipschitz
seminorms on the algebras of continuous functions on ordinary 
compact metric spaces. The natural setting for ``matrix algebras 
converge to the sphere'' is that of coadjoint orbits of compact
semi-simple Lie groups.

But physicists need much more than just the algebras. They need vector
bundles, gauge fields, Dirac operators, etc. So I now seek to give
precise meaning to statements in the physics literature of the kind
``here are the vector bundles over the matrix algebras that
correspond to the monopole bundles on the sphere''. (See \cite{R17}
for many references.) In \cite{R17} I studied convergence of ordinary 
vector bundles on ordinary compact metric spaces for ordinary 
Gromov-Hausdorff distance. From that study it became clear that one
needed Lipschitz-type seminorms on all the matrix algebras over the
underlying algebras, with these seminorms coherent in the
sense that they form a ``matrix seminorm'' (defined below).
The purpose of this paper is to define and develop the properties 
of such matrix seminorms for the setting of coadjoint orbits, and
especially to study how these matrix seminorms mesh with
quantum Gromov-Hausdorff distance.

Very recently \Lat \ introduced an improved version of quantum
Gromov-Hausdorff distance \cite{Ltr2} that he calls ``propinquity''. We
show that propinquity works very well for our setting of coadjoint orbits,
and so propinquity is the form of quantum Gromov-Hausdorff
distance that we use in this paper. \Lat \ defines his propinquity
in terms of an improved version of the ``bridges'' that I had used
in my earlier papers. For our matrix seminorms we need
corresponding ``matricial bridges'', and we show how to
construct natural ones for the setting of coadjoint orbits.

It is crucial to obtain good upper bounds for the lengths of
the bridges that we construct. In the matricial setting the
calculations become somewhat complicated. In order to ease
the calculations we introduce a notion of ``bridges with 
conditional expectations'', and develop their general
theory, including the matricial case, and including
bounds for their lengths in the matricial case. 

The main theorem of this paper, Theorem \ref{thmmat1q}, states
in a quantitative way that for the case of coadjoint orbits
the lengths of the matricial bridges goes to 0 as the size
of the matrix algebras goes to infinity.

We also discuss a closely related class of examples
coming from \cite{R25}, for which we construct bridges 
between different matrix algebras associated to a given 
coadjoint orbit. This provides further motivation for our 
definitions and theory of bridges with conditional
expectation.

\tableofcontents
 

\section{The first basic class of examples}
\label{basic1}

In this section we describe the first of the two basic classes of examples 
underlying this paper.
It consists of the main class of examples studied in the 
papers \cite{R7, R21}. 
We begin by describing the common setting for the two 
basic classes of examples.

Let $G$ be a compact group (perhaps even finite, at first).  Let $U$ be an
irreducible unitary representation of $G$ on a 
(finite-dimensional) Hilbert space ${\mathcal
H}$.  Let $\cB = \cL({\mathcal H})$ denote the $C^*$-algebra of all linear
operators on ${\cH}$ (a ``full matrix algebra'', with its operator
norm).  There is a natural action, $\a$, of $G$ on $\cB$ by conjugation by
$U$, that is, $\a_x(T) = U_xTU_x^*$ for $x \in G$ and $T \in \cB$.  Because
$U$ is irreducible, the action $\a$ is ``ergodic'', in the sense that the
only $\a$-invariant elements of $\cB$ are the scalar multiples of the
identity operator. 

Let $P$ be a rank-one projection in $\cB({\cH})$ (traditionally
specified by giving a non-zero vector in its range).  For any $T \in \cB$ we
define its Berezin covariant symbol \cite{R7},  $\s_T$, with
respect to $P$, by
\[
\s_T(x) = \rtr(T\a_x(P)),
\]
where $\rtr$ denotes the usual (un-normalized) trace on $\cB$.  (When the
$\a_x(P)$'s are viewed as giving states on $\cB$ via $\rtr$, they
form a family of ``coherent states'' \cite{R7} if a few additional
conditions are satisfied.)  Let $H$ denote the
stability subgroup of $P$ for $\a$.  Then it is evident that $\s_T$ can be
viewed as a (continuous) function on $G/H$.  We let $\l$ denote the action
of $G$ on $G/H$, and so on $\cA = C(G/H)$, by left-translation.  If we 
note that
$\rtr$ is $\a$-invariant, then it is easily seen that $\s$ is a unital,
positive, norm-nonincreasing, $\a$-$\l$-equivariant map from $\cB$ into $\cA$.

 Fix a continuous length function, $\ell$, on $G$ (so
$G$ must be metrizable).  Thus $\ell$ is non-negative, $\ell(x) = 0$ iff
$x = e_G$ (the identity element of $G$), $\ell(x^{-1}) = \ell(x)$, and
$\ell(xy) \le \ell(x) + \ell(y)$. We also require 
that $\ell(xyx^{-1}) = \ell(y)$ for all $x$, $y \in G$.  Then in
terms of $\a$ and $\ell$ we can define a 
seminorm, $L^\cB$, on $\cB$ by the formula
\begin{equation}
\label{lipn}
L^\cB(T) = \sup\{ \|\a_x(T) - T \|/\ell(x): x \in G \quad \mathrm{and} 
\quad x \neq e_G\}  .
\end{equation}
Then $(\cB,L_\cB)$ 
is an example of a compact C*-metric-space, 
as defined in definition 4.1 of \cite{R21}, 
and in particular $L_\cB$ satisfied the conditions
given there for being a ``Lip-norm''. 

Of course, from $\l$ and $\ell$ we also obtain a seminorm, 
$L^\cA$, on $\cA$ by the evident analog of
formula \ref{lipn}, except that we must permit $L^\cA$
to take the value $\infty$. It is shown in proposition 2.2
of \cite{R4} that the set of functions for which
$L^\cA$ is finite (the Lipschitz functions) is a dense
$*$-subalgebra of $\cA$.  Also, $L^\cA$ 
is the restriction to $\cA$ of the seminorm on
$C(G)$ that we get from $\ell$ when we view $C(G/H)$ as a subalgebra of
$C(G)$, as we will often do when convenient.  
From $L^\cA$ we can use equation \ref{metr} below
to recover the usual quotient metric \cite{Wvr2}
on $G/H$ coming from the metric on $G$ determined by $\ell$.  
One can check easily that $L^\cA$ in turn comes from
this quotient metric.  Thus $(\cA,L^\cA)$ is the compact C*-metric-space
associated to this ordinary compact metric space. 
Then for any bridge
from $\cA$ to $\cB$ we can use $L^\cA$ and $L^\cB$ to define the
length of the bridge in the way given by \Lat, which we will describe
soon below. 

For any two unital C*-algebras $\cA$ and $\cB$ a
bridge from  $\cA$ to $\cB$ in the sense of \Lat \ \cite{Ltr2} is a quadruple
$\{\cD, \pi_\cA, \pi_\cB, \o \}$ for which $\cD$ is a unital C*-algebra,
$\pi_\cA$ and $\pi_\cB$ are unital injective homomorphisms of
$\cA$ and $\cB$ into $\cD$, and $\o$ is a self-adjoint element
of $\cD$ such that 1 is an element of the spectrum
of $\o$ and  $\|\o \| = 1$. Actually, \Lat \ only requires a looser 
but more complicated condition on $\o$, but the
above condition will be appropriate for our examples. Following
\Lat \ we will call $\o$ the ``pivot'' for the bridge.
We will often omit mentioning the injections $\pi_\cA$ and $\pi_\cB$
when it is clear what they are from the context, and accordingly we
will often write as though $\cA$ and $\cB$ are unital subalgebras of $\cD$.

For our first class of examples, in which $\cA$ and $\cB$ are 
as described in the paragraphs above, 
we take $\cD$ to be the C*-algebra 
\[
\cD = \cA \otimes \cB = C(G/H, \cB)  .
\]
We take $\pi_\cA$ to be the injection of $\cA$ into $\cD$
defined by
\[
\pi_\cA(a) = a \otimes 1_\cB
\]
for all $a \in \cA$, where $1_\cB$ is the identity element of $\cB$. 
The injection $\pi_\cB$ is defined similarly.
From the many calculations done in \cite{R7, R21} it is
not surprising that we define the pivot $\o$ to be the function in  
$C(G/H, \cB)$ defined by
\[
\o(x) = \a_x(P)
\]
for all $x \in G/H$. We notice that $\o$ is actually a projection
in $\cD$, and so it satisfies the requirements for being a pivot.
We will denote the bridge $\{\cD, \o\}$ by $\Pi$.

For any bridge between two unital C*-algebras $\cA$ and $\cB$
and any choice of
seminorms $L^\cA$ and $L^\cB$ on $\cA$ and $\cB$,
\Lat \  \cite{Ltr2} defines the ``length'' of the
bridge in terms of these seminorms. For this
he initially puts relatively weak requirements on the seminorms,
but for the purposes of the matricial bridges that we will
define later, we need somewhat different weak requirements.
To begin with, \Lat \ only requires his seminorms, say
$L^\cA$ on a unital C*-algebra $\cA$, to be
defined on the subspace of self-adjoint elements of the
algebra, but we need $\cA$ to be defined on
all of $\cA$. To somewhat compensate for this we require
that $L^\cA$ be a  $*$-seminorm. As with \Lat,
our $L^\cA$ is permitted to take value $+\infty$.
\Lat \  also requires the subspace on which $L^\cA$
takes finite values to be dense in the algebra. We do not really
need this here, but for us there would be no harm in assuming it,
and all interesting examples probably will satisfy this. Finally,
\Lat \ requires that the null space of $L^\cA$ (i.e
where it takes value 0) be exactly $\bC 1_\cA$.
We must loosen this to simply requiring that $L^\cA(1_\cA) = 0$,
but permitting $L^\cA$ to also take value 0 on elements not
in $\bC 1_\cA$. We think of such seminorms as ``semi-Lipschitz
seminorms''. To summarize all of this we make:

\begin{defn}
\label{slip}
By a \emph{slip-norm} on a unital C*-algebra $\cA$ we mean
a $*$-seminorm, $L$, on $\cA$ that is permitted to take the
value $+\infty$, and is such that $L(1_\cA) = 0$.
\end{defn}

Because of these weak requirements on $L^\cA$, various
quantities in this paper may be $+\infty$, but most interesting
examples will satisfy stronger requirements that will 
result in various quantities being finite.

\Lat \ defines the length of a bridge by first defining its ``reach''
and its ``height''. We apply his definitions to slip-norms.

\begin{defn}
\label{reach}
Let $\cA$ and $\cB$ be unital C*-algebras and let
$\Pi = \{\cD, \o\}$ be a bridge from $\cA$ to $\cB$ . 
Let $L^\cA$ and $L^\cB$ be slip-norms on $\cA$ and $\cB$. Set 
\[
\cL_\cA^1 = \{a \in \cA:  a = a^* \quad \mathrm{and} \quad L^\cA(a) \leq 1\}   ,
\]
and similarly for $\cL_\cB^1$. (We can view these as subsets of $\cD$.) 
Then the \emph{reach} of $\Pi$ is given by:
\[
\mathrm{reach}(\Pi) = \mathrm{Haus}_\cD\{\cL_\cA^1  \o \ , \     \o  \cL_\cB^1\}   ,
\]
where $\mathrm{Haus}_\cD$ denotes the Hausdorff distance with respect
to the norm of $\cD$, and where the product defining $\cL_\cA^1  \o$ and
$ \o  \cL_\cB^1$ is that of $\cD$.  
\end{defn}

\Lat \ shows
just before definition 3.14 of \cite{Ltr2} that, 
under conditions that include the case in which 
$(\cA, L^\cA)$ and $(\cB, L^\cB)$ are 
C*-metric spaces, the reach of $\Pi$ is finite. 

To define the height of $\Pi$ we need to consider the state space, $S(\cA)$,
of $\cA$, and similarly for $\cB$ and $\cD$. Even more, we set
\[
S_1(\o) = \{\phi \in S(\cD): \phi(\o) = 1\}     ,   
\]
the ``level-1 set of $\o$''. The elements of $S_1(\o)$ are ``definite'' 
on $\o$ in the sense \cite{KR1} that 
for any $d \in \cD$ we have
\[
\phi(d\o) = \phi(d) = \phi(\o d)     .
\]
Let $\rho_\cA$ denote the metric on $S(\cA)$ determined by $L^\cA$
by the formula 
\begin{equation}
\label{metr}
\rho_\cA(\mu, \nu) = \sup\{|\mu(a) - \nu(a)|: L^\cA(a) \leq 1\}.
\end{equation}
(Without further conditions on $L_\cA$ we must
permit $\rho_\cA$ to take the value $+\infty$. Also, it is
not hard to see that the supremum can be taken equally well
just over $\cL^1_\cA$.)
Define $\rho_\cB$ on $S(\cB)$ similarly. 
\begin{notation}
We denote by $S_1^\cA(\o)$ the restriction of the
elements of $S_1(\o)$ to $\cA$. We define $S_1^\cB(\o)$ 
similarly. 
\end{notation}

\begin{defn}
\label{height}
Let $\cA$ and $\cB$ be unital C*-algebras and let
$\Pi = \{\cD, \o\}$ be a bridge from $\cA$ to $\cB$ . 
Let $L^\cA$ and $L^\cB$ be slip-norms on $\cA$ and $\cB$. 
The \emph{height} of the bridge $\Pi$ is given by
\[
\mathrm{height}(\Pi) =
\max\{\mathrm{Haus}_{\rho_\cA}(S_1^\cA(\o), S(\cA)) , \ 
\mathrm{Haus}_{\rho_\cB}(S_1^\cB(\o) , S(\cB))\}  ,
\]
where the Hausdorff distances are with respect to the indicated
metrics (and value $+\infty$ is allowed). 
The length of $\Pi$ is then defined by
\[
\mathrm{length}(\Pi) = \max\{\mathrm{reach}(\Pi), \mathrm{height}(\Pi)\}  .
\]
\end{defn}

In Section \ref{app1} we will show how to obtain a useful upper bound
on the length of $\Pi$ for our first class of examples.


\section{The second basic class of examples}
\label{basic2}

Our second basic class of examples has the same starting point
as the first class, consisting of $G$, $\cH$, $U$ and $P$ as before,
with $\cB = \cL(\cH)$. But now we will also have a second
irreducible representation. The more concrete class of examples
motivating this situation, but for which we will not need the details,
is that in \cite{R7, R25, R21} in 
which $G$ is a compact semi-simple Lie group, $\l$ is
a positive integral weight, and our two representations of $G$
are the representations with highest weights $m\l$ and $n\l$
for positive integers $m$ and $n$, $m \neq n$. 
Furthermore, the projections
$P$ are required to be those along highest weight vectors.  The
key feature of this situation that we do need to remember here
is that the stability subgroups $H$ for the two projections
coincide.

Accordingly, for our slightly more general situation, we will denote
our two representations by $(\cH^m, U^m)$ and $(\cH^n, U^n)$,
where now $m$ and $n$ are just labels. Our two C*-algebras will
be $\cB^m = \cL(\cH^m)$ and $\cB^n = \cL(\cH^n)$. We will
denote the action of $G$ on these two algebras just by $\a$,
since the context should always make clear which algebra
is being acted on. The
corresponding projections will be $P^m$ and $P^n$. The crucial
assumption that we make is that the stability subgroups of
these two projections coincide. We will denote this common
stability subgroup by $H$ as before.

We construct a bridge from $\cB^m$ to $\cB^n$ as follows.
We let $\cA = C(G/H)$ as in our first class of examples, 
and we define $\cD$ by
\[
\cD = \cB^m \otimes \cA \otimes \cB^n \ = \ C(G/H, \cB^m \otimes \cB^n)  .
\]
We view $\cB^m$ as a subalgebra of $\cD$ by sending
$b \in \cB^m$ to $b \otimes 1_\cA \otimes 1_{\cB^n}$, and similarly for $\cB^n$. 
From the many calculations done in \cite{R25} it is
not surprising that we define the pivot, $\o$, to be the function in  
$C(G/H, \cB^m \otimes \cB^n)$ defined
by
\[
\o(x) = \a_x(P^m) \otimes \a_x(P^n)  .
\] 

We let $L^m$ be the Lip-norm defined on $\cB^m$ determined 
by the action $\a$ and the length function $\ell$ as in 
Section \ref{basic1}, and similarly for $L^n$ on $\cB^n$. In
terms of these Lip-norms the length of any bridge from
$\cB^m$ to $\cB^n$ is defined. Thus the length of the bridge
described above is defined. In Section \ref{app2} we will see how 
to obtain useful upper bounds on the length of this bridge.


\section{Bridges with conditional expectations}
\label{brce}
We will now seek a somewhat general framework for obtaining 
useful estimates for the lengths of bridges such as
those of our two basic classes of examples.
To discover this framework we will explore some properties
of our two basic classes of examples.
We will summarize what we find at the end of this section.

On $G/H$ there is a unique probability measure that is invariant
under left translation by elements of $G$. We denote the
corresponding linear functional on $\cA = C(G/H)$ by $\t_\cA$,
and sometimes refer to it as the canonical tracial state on $\cA$.
On $\cB = \cL(\cH)$ there is a unique tracial state, which we
denote by $\t_\cB$. These combine to form a tracial state,
$\t_\cD = \t_\cA \otimes \t_\cB$ on $\cD = \cA \otimes \cB$. Similarly, 
we have the unique tracial
states $\t_m$ and $\t_n$ on $\cB^m$ and $\cB^n$, which
combine with $\t_\cA$ to give a tracial state on
$\cD = \cB^m \otimes \cA \otimes \cB^n$. 

For $\cD = \cA \otimes \cB$, the tracial state $\t_\cB$
determines a conditional expectation, $E^\cA$,
from $\cD$ onto its subalgebra $\cA$, defined on
elementary tensors by
\[
E^\cA(a \otimes b) = a \t_\cB(b)
\] 
for any $a \in \cA$ and $b \in \cB$. (This is an example of
a ``slice map'' as discussed in \cite{Blk2}, where
conditional expectations
are also discussed.) This conditional expectation
has the property that for any $d \in \cD$ we have
\[
\t_\cA(E^\cA(d)) = \t_\cD(d)  ,
\]
and it is the unique conditional expectation with this property. (See
corollary II.6.10.8 of \cite{Blk2}.) 
In the same way the tracial state $\t_\cA$
determines a canonical conditional expectation, $E^\cB$ from
$\cD$ onto its subalgebra $\cB$.

For the case in 
which $\cD = \cB^m \otimes \cA \otimes \cB^n$, 
the tracial state $\t_\cA \otimes \t_n$ on $\cA \otimes \cB^n$
determines a canonical conditional
expectation, $E^m$, from $\cD$ onto $\cB^m$ in the same 
way as above,
and the tracial state $\t_m \otimes \t_\cA$ determines
a canonical conditional expectation, $E^n$, from $\cD$ onto
$\cB^n$ .

These conditional expectations relate well to the pivots of the
bridges. For the case in which $\cD = \cA \otimes \cB$ we find
that for any $F \in \cD = C(G/H, \cB)$ we have
\[
E^\cA(F\o)(x) = \t_\cB(F(x)\a_x(P))  .
\]
In particular, for any $T \in \cB$ we have
\[
E^\cA(T\o)(x) = \t_\cB(T\a_x(P))
\]
for all $x \in G/H$. Aside from the fact that we are here using the
normalized trace instead of the standard trace on the matrix
algebra $\cB$, the right-hand side is exactly the definition of
the Berezin covariant symbol of $T$ that plays such an important
role in \cite{R7, R21} (beginning in section 1 of \cite{R7}), 
and that is denoted there by
$\s_T$. This indicates that for general $\cA$ and $\cB$ a
map $b \mapsto E^\cA(b\o)$ might be of importance to us.
For our specific first basic class of examples we note 
the following favorable properties:
\begin{enumerate}
\item Self-adjointness \quad i.e. $E^\cA(F^*\o) = (E^\cA(\o F))^*$
 for all $F \in \cD$.
\item $E^\cA(F\o) = E^\cA(\o F)$ \quad for all $F \in \cD$.
\item Positivity, \quad i.e. if $F \geq 0$ then $E^\cA(F\o) \geq 0$.
\item $E^\cA(1_\cD\o) = r^{-1}1_\cA$ \quad where $\cB$ is an $r \times r$ 
matrix algebra   .
\end{enumerate}

However, if we consider $E^\cB$ instead $E^\cA$, 
then for any $F \in \cD$ we have
\[
E^\cB(F\o) = \int_{G/H} F(x)\a_x(P) \ dx  ,
\]
and we see that in general properties 1-3 above fail, although property
4 still holds, with the same constant $r$. But if we restrict $F$ to be
any $f \in \cA$, we see that properties 1-3 again hold. Even more,
the expression 
\[
\int_{G/H} f(x)\a_x(P) \ dx
\]
is, except for normalization of the trace, the formula involved in the Berezin
contravariant symbol that in \cite{R7} is denoted by $\breve \s$.

For our second class of examples, in which 
$\cD = \cB^m \otimes \cA \otimes \cB^n$, we find that for
$F \in \cD = C(G/H, \cB^m \otimes \cB^n)$ we have
\[
E^m(F\o) = \int_{G/H} (\iota_\cA 
\otimes \t_n)(F(x)(\a_x(P^m) \otimes \a_x(P^n))) \ dx  .
\]
Again we see that properties 1-3 above are not in general satisfied.
But if we restrict $F$ to be any $T \in \cB^n$ then the above 
formula becomes
\[
\int_{G/H} \a_x(P^m) \t_n(T\a_x(P^n)) \ dx   ,
\]
which up to normalization of the trace
is exactly the second displayed formula in section 3 of \cite{R25}.
It is not difficult to
see that properties 1-3 above are again satisfied under this restriction.

We remark that it is easily seen that the maps 
$T \mapsto E^m(T\o)$ from $\cB^n$ to $\cB^m$ and
$S \mapsto E^n(S\o)$ from $\cB^m$ to $\cB^n$ are
each other's adjoints when they
are viewed as being between the Hilbert spaces $\cL^2(\cB^m, \t_m)$
and $\cL^2(\cB^n, \t_n)$. A similar statement hold for our first basic
class of examples.

With these observations in mind, we begin to formulate a somewhat
general framework. As before, we assume that we 
have two unital C*-algebras
$\cA$ and $\cB$, and a bridge $\Pi = (\cD, \o)$ from $\cA$ to
$\cB$. We now require that we are given conditional expectations
$E^\cA$ and $E^\cB$ from $\cD$ onto its subalgebras $\cA$ and
$\cB$. (We do not require that they be associated to any tracial
states.) We require that they relate well to $\o$. To begin with,
we will just require that $\o \geq 0$ so that $\o^{1/2}$ exists.
Then the map
\[
D \mapsto E^\cA(\o^{1/2}D\o^{1/2})
\]
from $\cD$ to $\cA$ is positive.

Once we have slip-norms $L^\cA$ and $L^\cB$ on $\cA$ and $\cB$,
we need to require that the conditional expectations are 
compatible with these slip-norms. To begin with, we
require that if $L^\cB(b) = 0$ for some $b \in \cB$
then $L^\cA(E^\cA(\o^{1/2}b\o^{1/2})) = 0$.
But one of the conditions on a Lip-norm is that it takes
value 0 exactly on the scalar multiples of the identity element,
and the case of Lip-norms is important to us. For Lip-norms
we see that the above requirement implies that
$E^\cA(\o) \in \bC 1_\cA$, and so $E^\cA(\o) = r_\o1_\cA$
for some positive real number $r_\o$. We require the same
of $E^\cB$ with the same real number, so that we require that
\[
E^\cA(\o) = r_\o1_\cA = E^\cB(\o)  .
\]
We then define a map, $\Phi^\cA$, from $\cD$ to $\cA$ by
\[
\Phi^\cA(d) = r_\o^{-1} E^\cA(\o^{1/2}d\o^{1/2})   .
\]
In a similar way we define $\Phi^\cB$ from $\cD$ to $\cB$.
We see that $\Phi^\cA$ and $\Phi^\cB$ are unital positive maps, and
so are of norm 1 (as seen by composing them with states). 
Then the main compatibility requirement that we need
is that for all $b \in \cB$ we have
\[
L^\cA(\Phi^\cA(b)) \leq L^\cB(b)   ,
\]
and similarly for $\cA$ and $\cB$ reversed. Notice that this
implies that if $b \in \cL^1_\cB$ then $\Phi^\cA(b) \in \cL^1_\cA$.

We now show how to obtain an upper bound for the reach of
the bridge $\Pi$ when the above requirements are satisfied.
Let $b \in \cL^1_\cB$ be given. As an approximation to $\o b$ by
an element of the form $a\o$ for some $a \in \cL^1_\cA$ we take
$a = \Phi^\cA(b)$. It is indeed in $\cL^1_\cA$ by the 
requirements made just above. This prompts us to set
\begin{equation}
\label{gamb1}
\g^\cB = 
\sup\{\|\Phi^\cA(b)\o - \o b\|_\cD :  b \in \cL^1_\cB \}  ,
\end{equation}
and we see that $\o b$ is then in the $\g^\cB$-neighborhood
of $\cL^1_\cA \o$. Note that without further assumptions
on $L^\cB$ we could have $\g^\cB = +\infty$.
Interchanging the roles of $\cA$ and $\cB$, we define $\g^\cA$
similarly. We then see that
\[
\mathrm{reach}(\Pi) \leq \max\{\g^\cA, \g^\cB\}  .
\]
We will explain in Sections \ref{app1} and \ref{app2} 
why this upper bound is
useful in the context of \cite{R7, R25, R21}.

We now consider the height of $\Pi$. For this we need to consider $S_1(\o)$
as defined in Section \ref{basic1}. Let $\mu \in S(\cA)$. Because 
$\Phi^\cA$ is positive and unital, its
composition with $\mu$ is in $S(\cD)$. When we
evaluate this composition at $\o$ to see if it is in $S_1(\o)$, we
obtain $\mu(r_\o^{-1} E^\cA(\o^2))$, and we need this to equal 1.
Because $\mu(r_\o^{-1} E^\cA(\o)) = 1$, it follows
that we need $\mu(r_\o^{-1} E^\cA(\o-\o^2)) = 0$. If this is to hold
for all $\mu \in S(\cA)$, we must have $E^\cA(\o-\o^2) = 0$.
If $E^\cA$ is a faithful conditional expectation, as is true for
our basic examples, then because $\o \geq \o^2$ 
it follows that $\o^2 = \o$ so that
$\o$ is a projection, as is also true for our basic examples.
These arguments are reversible, and so it is easy to see
that if $\o$ is a projection, then for every $\mu \in S(\cA)$
we obtain an element, $\phi_\mu$, of $S_1(\o)$, defined by
\[
\phi_\mu(d) = \mu(r_\o^{-1} E^\cA(\o d\o)) = \mu(\Phi^\cA(d))  .
\]  
This provides us with a substantial collection of elements
of $S_1(\o)$.

Consequently, since to estimate the height of $\Pi$ we need to
estimate the distance from each $\mu \in S(\cA)$ to
$S_1^\cA(\o)$, we can hope that $\phi_\mu$ restricted
to $\cA$ is relatively close to $\mu$. Accordingly, for any
$a \in \cA$ we
compute
\[
|\mu(a) - \phi_\mu(a)| = |\mu(a) - \mu(\Phi^\cA(a))|
\leq \|a - \Phi^\cA(a)\|  . 
\]
Set 
\[
\d^\cA = \sup \{\|a - \Phi^\cA(a)\| : a \in \cL^1_\cA\}.
\]
Then we see that
\[
\rho_{L^\cA}(\mu, \ \phi_\mu |_\cA) \leq \d^\cA  .
\]
We define $\d^\cB$ in the same way, and obtain the
corresponding estimate for the distances from elements
of $S(\cB)$ to the restriction of $S_1(\o)$ to $\cB$. 
In this way we see that
\[
\mathrm{height}(\Pi) \leq \max\{\d^\cA, \d^\cB\}  .
\]
(Notice that $\d^\cA$ involves what $\Phi^\cA$ does on $\cA$,
whereas $\g^\cA$ involves what $\Phi^\cB$ does on $\cA$.)

While this bound is natural within this context, it turns out not to be
so useful for our two basic classes of example. In 
Proposition \ref{alth} below we will give a different bound that
does turn out to be useful for our basic examples. But perhaps
other examples will arise for which the above bound is useful.

We now summarize the main points discussed in this section.

\begin{defn}
\label{defexp}
Let $\cA$ and $\cB$ be unital C*-algebras and 
let $\Pi=(\cD, \o)$ be a bridge from
$\cA$ to $\cB$. We say that $\Pi$ is a \emph{bridge with conditional
expectations} if conditional expectations $E^\cA$ and $E^\cB$ from
$\cD$ onto $\cA$ and $\cB$ are specified, satisfying the following 
properties:
\begin{enumerate}
\item The conditional expectations are faithful. 
\item The pivot $\o$ is a projection.
\item There is a constant, $r_\o$, such that
\[
E^\cA(\o) = r_\o1_\cD = E^\cB(\o)  .
\]
\end{enumerate} 
For such a bridge with conditional expectations we define
$\Phi^\cA$ on $\cD$ by
\[
\Phi^\cA(d) = r_\o^{-1} E^\cA(\o d\o)   .
\]
We define $\Phi^\cB$ similarly, with the 
roles of $\cA$ and $\cB$ reversed.
We will often write $\Pi = (\cD, \o, E^\cA, E^\cB)$ for a bridge
with conditional expectations.
\end{defn}

I should mention here that at present I do not see how the class of
examples considered by \Lat \ that involves 
non-commutative tori \cite{Ltr3}
fits into the setting of bridges with conditional expectations, though
I have not studied this matter carefully. It would certainly be
interesting to understand this better. I also do not see how the
general case of ordinary compact metric spaces, as discussed
in theorem 6.6 of \cite{Ltr2}, fits into the setting of 
bridges with conditional expectations

\begin{defn}
\label{admis}
With notation as above, let  $L^\cA$ and
$L^\cB$ be slip-norms on $\cA$ and $\cB$. 
We say that a bridge with conditional 
expectations $\Pi = (\cD, \o, E^\cA, E^\cB)$ 
is \emph{admissible for $L^\cA$ and
$L^\cB$} if
\[
L^\cA(\Phi^\cA(b)) \leq L^\cB(b)   
\]
for all $b \in \cB$, and
\[
L^\cB(\Phi^\cB(a)) \leq L^\cA(a)   
\]
for all $a \in \cA$  .
\end{defn}

We define the reach, height and length of a bridge with 
conditional expectations  $(\cD, \o, E^\cA, E^\cB)$ to
be those of the bridge  $(\cD, \o)$.

From the earlier discussion we obtain:

\begin{thm}
\label{thmadmis}
Let  $L^\cA$ and
$L^\cB$ be slip-norms on unital C*-algebras $\cA$ and $\cB$,
and let  $\Pi = (\cD, \o, E^\cA, E^\cB)$ be a bridge with conditional expectations from $\cA$ to $\cB$
that is admissible for $L^\cA$ and
$L^\cB$. Then 
\[
\mathrm{reach}(\Pi) \leq \max\{\g^\cA, \g^\cB\},
\]
where
\[
\g^\cA = \sup\{\|a\o - \o\Phi^\cB(a)\|_\cD : a \in \cL^1_\cA \},
\]
and similarly for $\g^\cB$, while
\[
\mathrm{height}(\Pi) \leq \max\{\d^\cA, \d^\cB\}  ,
\]
where
\[
\d^\cA = \sup \{\|a - \Phi^\cA(a)\|: a \in \cL^1_\cA\}
\]
and similarly for $\d^\cB$. Consequently
\[
\mathrm{length}(\Pi) \leq \max\{\g^\cA, \g^\cB, \d^\cA, \d^\cB\}  .
\]
(Consequently the propinquity between $(\cA, L^\cA)$ and $(\cB, L^\cB)$,
as defined in \cite{Ltr2}, is no greater than the right-hand side above.) 
\end{thm}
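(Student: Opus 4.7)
The theorem packages into a single statement the sequence of estimates developed in the discussion preceding it, so the plan is to verify two separate Hausdorff-distance bounds -- one for the reach, one for the height -- and then combine them.

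For the reach, given $b \in \cL^1_\cB$ I would take $a := \Phi^\cA(b)$. Admissibility gives $L^\cA(a) \leq L^\cB(b) \leq 1$, and $a$ is self-adjoint because $\o$ is self-adjoint (being a projection) and conditional expectations are $*$-maps; hence $a \in \cL^1_\cA$. Taking the adjoint of $\Phi^\cA(b)\o - \o b$ shows
\[
\|a\o - \o b\|_\cD \;=\; \|b\o - \o \Phi^\cA(b)\|_\cD \;\leq\; \g^\cB,
\]
so every element of $\o\cL^1_\cB$ lies within $\g^\cB$ of $\cL^1_\cA\o$. The symmetric choice $b := \Phi^\cB(a)$ for $a \in \cL^1_\cA$ yields the opposite inclusion with constant $\g^\cA$, and the two inclusions together give $\mathrm{reach}(\Pi) \leq \max\{\g^\cA,\g^\cB\}$.

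For the height, the key device is the map $\mu \mapsto \phi_\mu$, where $\phi_\mu(d) := \mu(\Phi^\cA(d))$ for $\mu \in S(\cA)$. Since $\Phi^\cA$ is unital and positive, $\phi_\mu$ lies in $S(\cD)$; and because $\o$ is a projection (so $\o^3 = \o$),
\[
\phi_\mu(\o) \;=\; \mu\bigl(r_\o^{-1} E^\cA(\o)\bigr) \;=\; \mu(1_\cA) \;=\; 1,
\]
placing $\phi_\mu$ in $S_1(\o)$. For $a \in \cL^1_\cA$ we have $|\mu(a) - \phi_\mu(a)| \leq \|a - \Phi^\cA(a)\| \leq \d^\cA$, so $\rho_\cA(\mu, \phi_\mu|_\cA) \leq \d^\cA$ (using that $\rho_\cA$ may be computed as a supremum over $\cL^1_\cA$, as noted just after its definition). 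Since $S_1^\cA(\o) \subseteq S(\cA)$ automatically, the other direction of that Hausdorff distance contributes nothing, and the $\cA$-part of the height is thus at most $\d^\cA$; symmetrically the $\cB$-part is at most $\d^\cB$. Taking the max with the reach estimate gives the length bound, and the parenthetical claim about propinquity follows because propinquity is an infimum of lengths over a class of bridges containing $\Pi$. I do not foresee a real obstacle here: every ingredient has been prepared in the preceding discussion, and the only small verifications -- that $\Phi^\cA$ preserves self-adjointness and is unital positive (immediate from Definition \ref{defexp}) and that the $\rho_\cA$-supremum may be restricted to $\cL^1_\cA$ -- are routine.
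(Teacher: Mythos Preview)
Your proposal is correct and follows essentially the same approach as the paper: the theorem is indeed just a summary of the discussion preceding it, and your verifications of the reach and height bounds reproduce that discussion step for step (choosing $a = \Phi^\cA(b)$ for the reach, and $\phi_\mu = \mu \circ \Phi^\cA$ for the height). Your explicit check that $\Phi^\cA(b)$ is self-adjoint and your remark that the inclusion $S_1^\cA(\o) \subseteq S(\cA)$ makes one direction of the Hausdorff distance trivial are small clarifications the paper leaves implicit, but otherwise there is no substantive difference.
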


We could axiomitize the above situation in terms of just $\Phi^\cA$ and
$\Phi^\cB$, without requiring that they come from conditional expectations,
but at present I do not know of examples for which this would be
useful. It would not suffice to require that $\Phi^\cA$ and
$\Phi^\cB$ just be positive (and unital)  because for the matricial
case discussed in the next
section they would need to be completely positive.

The following result is very pertinent to our first
class of basic examples. 
\begin{prop}
\label{abel} With notation as above, suppose that our bridge
$\Pi$ has the quite special property that $\o$ commutes
with every element of $\cA$, or at least that 
$E^\cA(\o a\o) = E^\cA(a\o)$ for all $a \in \cA$. Then 
$\Phi^\cA(a) = a$ for all $a \in \cA$. Consequently
$\d^\cA = 0$, and the
restriction of $S_1(\o)$ to $\cA$ is all of $S(\cA)$.
\end{prop}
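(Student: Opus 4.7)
The plan is to exploit the module property of conditional expectations together with the projection identity $\o^2 = \o$ so as to collapse $\Phi^\cA$ to the identity on $\cA$, after which both consequences are immediate.

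First I would observe that under the hypothesis we have $E^\cA(\o a \o) = E^\cA(a \o)$ for every $a \in \cA$. In the special (stronger) case where $\o$ commutes with $\cA$, this follows because $\o a \o = a \o^2 = a \o$, using that $\o$ is a projection (condition (2) of Definition \ref{defexp}); the weaker form is exactly the alternative hypothesis given in the statement. Either way, this is the only place the assumption on $\o$ gets used.

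Next I would invoke the standard bimodule property of the conditional expectation $E^\cA$ onto the subalgebra $\cA$: for $a \in \cA$ and $d \in \cD$, $E^\cA(a d) = a E^\cA(d)$. Applied with $d = \o$ and using $E^\cA(\o) = r_\o 1_\cA$ from Definition \ref{defexp}, this gives
\[
E^\cA(a\o) = a E^\cA(\o) = r_\o\, a.
\]
Combining with the previous step yields $\Phi^\cA(a) = r_\o^{-1} E^\cA(\o a \o) = r_\o^{-1} E^\cA(a \o) = a$, which is the main claim.

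Once $\Phi^\cA$ restricts to the identity on $\cA$, the formula defining $\d^\cA$ immediately gives $\d^\cA = 0$. For the state-space assertion, I would recall from the discussion preceding the proposition that every $\mu \in S(\cA)$ produces a state $\phi_\mu \in S_1(\o)$ via $\phi_\mu(d) = \mu(\Phi^\cA(d))$; restricting this to $\cA$ gives $\phi_\mu|_\cA(a) = \mu(\Phi^\cA(a)) = \mu(a)$, so $\mu \in S_1^\cA(\o)$. Since $\mu$ was arbitrary and $S_1^\cA(\o) \subseteq S(\cA)$ always, equality follows. There is really no obstacle here: the whole argument is a one-line application of the module property, and the "hard part" is simply recognizing that the hypothesis on $\o$ has been tailored precisely to make $\o a \o$ and $a \o$ interchangeable inside $E^\cA$.
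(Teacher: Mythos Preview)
Your proof is correct and follows the same approach as the paper: both use the hypothesis to replace $E^\cA(\o a\o)$ by $E^\cA(a\o)$ and then apply the $\cA$-module property of the conditional expectation together with $E^\cA(\o) = r_\o 1_\cA$. The paper's proof is a single displayed line and leaves the two consequences implicit, whereas you spell out the derivation of $\d^\cA = 0$ and the state-space equality explicitly; but the substance is identical.
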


\begin{proof}
This depends on the conditional expectation property
of $E^\cA$. For  $a \in \cA$ we have
\[
\Phi^\cA(a) = r_\o^{-1}E^\cA(a\o) 
= ar_\o^{-1}E^\cA(\o) =   a  .
\]
\end{proof}

The following steps might not initially seem useful, but 
in Sections \ref{app1} and \ref{app2} we will
see in connection with our basic examples that they are quite 
useful. Our
notation is as above. Let $\nu \in S(\cB)$. Then
as seen above, $\nu \circ \Phi^\cB \in S_1(\o)$, and so its
restriction to $\cA$ is in $S(\cA)$. But then 
$\nu \circ \Phi^\cB \circ \Phi^\cA \in S_1(\o)$. Let us
denote it by $\psi_\nu$. Then the restriction of $\psi_\nu$
to $\cB$ can be used as an
approximation to $\nu$ by an element of $S_1(\o)$. Now
for any $b \in \cB$ we have
\[
|\nu(b) - \psi_\nu(b)| = 
|\nu(b) - (\nu \circ \Phi^\cB \circ \Phi^\cA)(b)| 
\leq \|b - \Phi^\cB(\Phi^\cA(b))\|  .
\]
\begin{notation}
\label{bert}
In terms of the above notation we set
\[
\hat\d^\cB = \sup \{\|b - \Phi^\cB(\Phi^\cA(b))\| : b \in \cL^1_\cB\}.
\]
\end{notation}
We note that $L^\cB( \Phi^\cB(\Phi^\cA(b))) \leq L^\cB(b)$ because
of the admissibility requirements of Definition \ref{admis}. It follows that
\[
\rho_{L^\cB}(\nu, \ \psi_\mu) \leq \hat \d^\cB  .
\]
We define $\hat \d^\cA$ in the same way, and obtain the
corresponding estimate for the distances from elements
of $S(\cA)$ to the restriction of $S_1(\o)$ to $\cA$. 
In this way we obtain:
\begin{prop}
\label{alth}
For notation as above,
\[
\mathrm{height}(\Pi) \leq 
\max\{\min\{ \d^\cA, \hat \d^\cA\}, \min\{\d^\cB, \hat \d^\cB\}\}  .
\]
\end{prop}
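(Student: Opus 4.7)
The plan is to bound, for each state $\mu \in S(\cA)$, its $\rho_\cA$-distance to $S_1^\cA(\o)$ by each of $\d^\cA$ and $\hat\d^\cA$ separately, and to perform the analogous computation on the $\cB$-side. Since $S_1^\cA(\o) \subseteq S(\cA)$, the Hausdorff distance $\mathrm{Haus}_{\rho_\cA}(S_1^\cA(\o), S(\cA))$ reduces to $\sup\{\rho_\cA(\mu, S_1^\cA(\o)) : \mu \in S(\cA)\}$, so the two approximation bounds on a given side can be combined by taking their minimum, and the two sides by taking the maximum, exactly as in the statement.

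The first approximation, giving $\d^\cA$, was already produced in the paragraph above the proposition: for $\mu \in S(\cA)$, the state $\phi_\mu = \mu \circ \Phi^\cA$ lies in $S_1(\o)$, and for $a \in \cL^1_\cA$ one has $|\mu(a) - \phi_\mu(a)| \leq \|a - \Phi^\cA(a)\|$. For the $\hat\d^\cA$ bound I would introduce the double composition $\psi_\mu := \mu \circ \Phi^\cA \circ \Phi^\cB$, i.e.\ the functional $d \mapsto \mu(\Phi^\cA(\Phi^\cB(d)))$ on $\cD$. This is a state on $\cD$ because $\Phi^\cA$ and $\Phi^\cB$ are both unital and positive, and it belongs to $S_1(\o)$: using $\o^2 = \o$, one computes $\Phi^\cB(\o) = r_\o^{-1}E^\cB(\o^3) = r_\o^{-1}E^\cB(\o) = 1_\cB$, and then $\Phi^\cA(1_\cB) = r_\o^{-1}E^\cA(\o^2) = 1_\cA$, so $\psi_\mu(\o) = \mu(1_\cA) = 1$. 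For any $a \in \cL^1_\cA$ the estimate
\[
|\mu(a) - \psi_\mu(a)| \leq \|a - \Phi^\cA(\Phi^\cB(a))\|
\]
then yields $\rho_\cA(\mu, \psi_\mu|_\cA) \leq \hat\d^\cA$ upon taking the supremum over $\cL^1_\cA$.

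The parallel analysis on the $\cB$-side, with the roles of $\cA$ and $\cB$ interchanged, produces the bound $\min\{\d^\cB, \hat\d^\cB\}$; the $\hat\d^\cB$ half is precisely the construction $\nu \mapsto \nu \circ \Phi^\cB \circ \Phi^\cA$ that was carried out in the discussion preceding Notation \ref{bert}. Taking the maximum across the two sides then yields the claimed inequality. I do not anticipate any real obstacle: the only delicate point is verifying that the doubly composed functional is a state in $S_1(\o)$, and this follows immediately from the fact that $\o$ is a projection combined with the normalization $E^\cA(\o) = E^\cB(\o) = r_\o 1_\cD$ built into the definition of a bridge with conditional expectations, together with the positivity and unitality of the two slice-like maps $\Phi^\cA$ and $\Phi^\cB$.
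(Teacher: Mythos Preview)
Your proposal is correct and follows essentially the same route as the paper: the paper's discussion before Definition \ref{defexp} yields the $\d^\cA$ (and $\d^\cB$) bound via $\phi_\mu = \mu\circ\Phi^\cA$, and the discussion surrounding Notation \ref{bert} yields the $\hat\d^\cB$ (and by symmetry $\hat\d^\cA$) bound via the doubly composed state, which is exactly your $\psi_\mu$. The only cosmetic difference is that the paper checks $\psi_\mu\in S_1(\o)$ by noting that $(\mu\circ\Phi^\cA)|_\cB$ is a state on $\cB$ and then invoking the already-established fact that composing any state with the appropriate $\Phi$ lands in $S_1(\o)$, whereas you verify $\psi_\mu(\o)=1$ directly; both arguments are immediate from $\o^2=\o$ and the normalization $E^\cA(\o)=E^\cB(\o)=r_\o 1_\cD$.
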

We will see in Section \ref{app1} that for our first class of
basic examples, $ \Phi^\cB \circ \Phi^\cA$ is exactly
a term that plays an important role 
in \cite{R7, R21}. It is essentially
an ``anti-Berezin-transform''.

 
\section{The corresponding matricial bridges}
\label{matricial}

Fix a positive integer $q$. We let $M_q$ denote the C*-algebra
of $q \times q$ matrices with complex entries. For any C*-algebra
$\cA$ we let $M_q(\cA)$ denote the C*-algebra of $q \times q$ matrices with entries in $\cA$. We often identify it in the evident way with the C*-algebra
$M_q \otimes \cA$.

Let $\cA$ and $\cB$ be unital C*-algebras, and let $\Pi = (\cD, \o)$ be 
a bridge from $\cA$ to $\cB$. Then $M_q(\cA)$ can be viewed as
a subalgebra of $M_q(\cD)$, as can $M_q(\cB)$. Let
$\o_q = 1_q \otimes \o$, where $1_q$ is the identity element
of $M_q$, so $\o_q$ can be viewed as the diagonal
matrix in $M_q(\cD)$ with $\o$ in each diagonal entry.
Then it is easily seen that $\Pi^q = (M_q(\cD), \o_q)$ is a bridge
from $M_q(\cA)$ to $M_q(\cB)$.

In order to measure the length of $\Pi^q$ we need slip-norms
$L^\cA_q$ and $L^\cB_q$ on  $M_q(\cA)$ and $M_q(\cB)$.  
It is reasonable to want these slip-norms to be coherent
in some sense as $q$ varies. The discussion that we will
give just after Theorem \ref{thmht} suggests that the
coherence requirement be that the sequences
$\{L^\cA_q\}$ and $\{L^\cB_q\}$ form ``matrix slipnorms''.
To explain what 
this means, for any positive integers $m$ and $n$ we let
$M_{mn}$ denote  the linear space of $m \times n$ 
matrices with complex entries, equipped with the norm obtained
by viewing such matrices as operators from the Hilbert
space $\bC^n$ to the Hilbert space $\bC^m$. We then note
that for any $A \in M_n(\cA)$, any $\a \in M_{mn}$,
and any $\b \in M_{nm}$ the
usual matrix product $\a A \b$ is in $M_m(\cA)$. The
following definition, for the case of Lip-norms, 
is given in definition 5.1 of \cite{Wuw2} (and see
also \cite{Wuw1, Wuw3, R21, ER}). 

\begin{defn}
\label{mtx}
A sequence $\{L^\cA_n\}$  
is a  \emph{matrix slip-norm} for $\cA$ if 
$L^\cA_n$ is a $*$-seminorm (with value $+\infty$ permitted)
on $M_n(\cA)$ 
for each integer $n \geq 1$, and this family of
seminorms has the following properties:
\begin{enumerate}
\item For any  $A \in M_n(\cA)$, any $\a \in M_{mn}$, and
any $\b \in M_{nm}$, we have
\[
L_m^\cA(\a A \b) \leq \|\a\|L_n^\cA(A)\|\b\|.
\]
\item For any  $A \in M_m(\cA)$ and any  $C \in M_n(\cA)$
we have
\[
L_{m+n}^\cA\left(
\begin{bmatrix}
A & 0 \\
0 & C
\end{bmatrix}
\right)
= \max(L_m^\cA(A), L_n^\cA(C))    .
\]   
\item $L_1^\cA$ is a slip-norm. 
\end{enumerate}
\end{defn}

We remark that the properties above imply that for $n \geq 2$ 
the null-space of $L_n^\cA$ contains all of $M_n$,
not just the scalar multiples of the identity. This is why 
our definition of slip-norms does not require that the
null-space is exactly the scalar multiples of the identity. 

Now let $\Pi = (\cD, \o, E^\cA, E^\cB)$ be a bridge with
conditional expectations. For any integer $q \geq 1$ set 
$E^\cA_q = \iota_q \otimes E^\cA$, where $\iota_q$
is the identity map from $M_q$ onto itself. Define
$E^\cB_q$ similarly. Then it is easily seen that $E^\cA_q$
and $E^\cB_q$ are faithful conditional expectations from
$M_q(\cD)$ onto its subalgebras $M_q(\cA)$ and $M_q(\cB)$
respectively. Furthermore, $E^\cA_q(\o^q)$ is the diagonal
matrix each diagonal entry of which is $E^\cA(\o) = r_\o 1_\cD$, and
from this we see that $E^\cA_q(\o_q) = r_\o 1_{M_q(\cA)}$.
Thus $r_{\o_q} = r_\o$.
It is also clear that $\o_q$ is a projection. Putting this
all together, we obtain:

\begin{prop}
\label{promatx}
Let $\Pi = (\cD, \o, E^\cA, E^\cB)$ be a bridge with
conditional expectations from $\cA$ to $\cB$. Then
\[
\Pi^q = (M_q(\cD), \o_q, E^\cA_q, E^\cB_q)
\]
is a bridge with conditional expectations from
$M_q(\cA)$ to $M_q(\cB)$. It has the same constant
$r_\o$ as does $\Pi$.
\end{prop}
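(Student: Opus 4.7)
The plan is to verify each of the three defining conditions in Definition \ref{defexp} for $\Pi^q$, building on the fact that $(M_q(\cD),\o_q)$ was already noted to be a bridge from $M_q(\cA)$ to $M_q(\cB)$ in the paragraph opening this section. The work is essentially structural: tensoring every ingredient with the identity on $M_q$ preserves each required property, so the real content is a short faithfulness argument.

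First I would show that $E^\cA_q = \iota_q \otimes E^\cA$ is a conditional expectation from $M_q(\cD)$ onto $M_q(\cA)$; the argument for $E^\cB_q$ is identical. Writing elements of $M_q(\cD)$ as matrices $[d_{ij}]$, the map acts entrywise as $[d_{ij}] \mapsto [E^\cA(d_{ij})]$. Unitality, positivity, $*$-linearity, and the $M_q(\cA)$-bimodule property all follow directly by tensoring the corresponding properties of $E^\cA$ with $\iota_q$ and applying them entrywise, so $E^\cA_q$ is indeed a conditional expectation onto $M_q(\cA)$.

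For faithfulness, suppose $X = [x_{ij}] \in M_q(\cD)$ satisfies $E^\cA_q(X^* X) = 0$. The $(i,i)$-diagonal entry of $X^*X$ is $\sum_k x_{ki}^* x_{ki}$, a positive element of $\cD$, and its image under $E^\cA$ is the $(i,i)$-entry of $E^\cA_q(X^*X)$, hence $0$. By positivity and faithfulness of $E^\cA$, each summand forces $E^\cA(x_{ki}^* x_{ki}) = 0$, and faithfulness of $E^\cA$ on positive elements gives $x_{ki} = 0$ for all $k,i$. Hence $X = 0$, and $E^\cA_q$ is faithful; the same argument works for $E^\cB_q$.

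Next I would verify that $\o_q = 1_q \otimes \o$ is a projection: self-adjointness and idempotence pass directly from $\o$ through the tensor product with $1_q$. Finally, a direct computation gives
\[
E^\cA_q(\o_q) = (\iota_q \otimes E^\cA)(1_q \otimes \o) = 1_q \otimes E^\cA(\o) = 1_q \otimes r_\o 1_\cD = r_\o 1_{M_q(\cD)},
\]
and the same calculation with $E^\cB$ in place of $E^\cA$ shows $E^\cB_q(\o_q) = r_\o 1_{M_q(\cD)}$. Hence condition (3) of Definition \ref{defexp} holds for $\Pi^q$ with the same constant $r_{\o_q} = r_\o$. There is no genuine obstacle in this argument; the only step requiring a moment's thought is faithfulness of $E^\cA_q$, where one must pass from the vanishing of the diagonal of $E^\cA_q(X^*X)$ back to the vanishing of $X$ itself, but this is standard once one writes out the diagonal entries of $X^*X$.
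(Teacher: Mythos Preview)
Your proof is correct and follows essentially the same approach as the paper: the paper's argument is the paragraph immediately preceding the proposition, which simply asserts that it is ``easily seen'' that $E^\cA_q$ and $E^\cB_q$ are faithful conditional expectations, observes that $\o_q$ is a projection, and computes $E^\cA_q(\o_q)$ entrywise to get $r_\o 1_{M_q(\cA)}$. You have spelled out the faithfulness step explicitly via the diagonal entries of $X^*X$, which is the natural way to flesh out what the paper leaves implicit.
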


We can then set $\Phi^\cA_q = \iota_q \otimes \Phi^\cA$,
and similarly for $\Phi^\cB_q$. Because $\Pi^q$
has the same constant
$r_\o$ as does $\Pi$, we see that for any $D \in M_q(\cD)$
we have
\[
\Phi^\cA_q(D) = r_\o^{-1} E^\cA_q(\o_q D\o_q)   .
\]

Suppose now that $\cA$ and $\cB$ have matrix slip-norms
$\{L^\cA_n\}$ and  $\{L^\cB_n\}$. 
We remark that a matrix slip-norm $\{L^\cA_n\}$ 
is in general not
at all determined by $L^\cA_1$. Thus a bridge that is
admissible for $L^\cA_1$ as in Definition \ref{admis}
need not relate well to the seminorms $L^\cA_n$ for higher $n$.

\begin{defn}
\label{matadm}
With notation as above, let  $\{L_n^\cA\}$ and
$\{L_n^\cB\}$ be matrix slip-norms 
on $\cA$ and $\cB$. We say that
a bridge with conditional expectations $\Pi = (\cD, \o, E^\cA, E^\cB)$ 
is \emph{admissible for $\{L_n^\cA\}$} and
$\{L_n^\cB\}$ if for all integers $n \geq 1$ the bridge $\Pi^n$
is admissible for $L_n^\cA$ and
$L_n^\cB$; that is,  for all integers $n \geq 1$
we have
\[
L_n^\cA(\Phi_n^\cA(B)) \leq L_n^\cB(B)   
\]
for all $B \in M_n(\cB)$, 
where
\[
\Phi_n^\cA(B) = r_\o^{-1} E_n^\cA(\o_nB\o_n)   ,
\]
and similarly with the roles of $\cA$ and $\cB$ reversed.
\end{defn}

We assume now that $\Pi = (\cD, \o, E^\cA, E^\cB)$ 
is admissible for $\{L_n^\cA\}$ and
$\{L_n^\cB\}$.
Since for a fixed integer $q$ the bridge $\Pi^q$ is 
admissible for $L_q^\cA$ and
$L_q^\cB$, the length of $\Pi^q$ is
defined. We now show how to obtain an upper bound for
the length of $\Pi^q$ in terms of the data used in
the previous section to get an upper bound on the
length of $\Pi$.

We consider first the reach of $\Pi^q$. Set, much as earlier,
\[
\cL_\cA^{1q} = \{A \in M_q(\cA):  
A = A^* \quad \mathrm{and} \quad L^\cA_q(A) \leq 1\}   ,
\]
and similarly for $\cL_\cB^{1q}$. 
Then the reach of $\Pi^q$ is defined to be
\[
\mathrm{Haus}_{M_q(\cD)}\{\cL_\cA^{1q}  \o_q \ , \     \o_q  \cL_\cB^{1q}\}   .
\]
Suppose that $B \in \cL_\cB^{1q}$. Then $\Phi^\cA_q(B) \in  \cL_\cA^{1q}$
by the admissibility requirement. So we want to bound
\[
\|\Phi^\cA_q(B)\o_q - \o_q B\|_{M_q(\cD)}  .
\]
I don't see any better way to bound this in terms of the data used
in Theorem \ref{thmadmis}  for $\Pi$ than by using an entry-wise estimate 
as done in the third paragraph before
lemma 14.2 of \cite{R21}). We use the fact that 
for a $q \times q$ matrix 
$C = [c_{jk}]$ with entries in a C*-algebra 
we have $\|C\| \leq q \max_{jk}\{\|c_{jk}\|\}$ (as is
seen by expressing $C$ as the sum of the $q$ matrices whose 
only non-zero entries are the entries $c_{jk}$ for which $j-k$
is a given constant mod $q$). In this way, 
for $B \in M_q(\cB)$ with $B = [b_{jk}]$ we find that the last
displayed term above is
\[
\leq q \max_{jk}\{\|\Phi^\cA(b_{jk})\o - \o b_{jk}\|\}   .
\]
The small difficulty is that the $b_{jk}$'s need not be 
self-adjoint. But for any $b \in \cB$, if we denote its
real and imaginary parts by $b_r$ and $b_i$, then
because $L^\cB$ is a $*$-seminorm it follows that
$L^\cB(b_r) \leq L^\cB(b)$ and similarly for $b_i$.
Consequently
\begin{eqnarray*}
\|\Phi^\cA(b)\o - \o b\| 
\leq \|\Phi^\cA(b_r)\o - \o b_r\| +   \|\Phi^\cA(b_i)\o - \o b_i\|    \\
\leq \g^\cB L^\cB(b_r) + \g^\cB L^\cB(b_i) \leq 2\g^\cB L^\cB(b).
\end{eqnarray*}
Thus the term displayed just before is
\[
\leq 2q\g^\cB L^\cB(b_{jk})  .
\]
But
$\{L^\cB_n\}$ is a matrix slip-norm, and by the
first property of such seminorms given in Definition \ref{mtx}, we have
\[
 \max \{L^\cB(b_{jk})\}  \leq L^\cB_q(B) . 
\]
Thus for $B \in \cL_\cB^{1q}$ we see that
\[
\|\Phi^\cA_q(B)\o_q - \o_q B\|_{M_q(\cD)} \leq 2q\g^\cB,  
\]
so that $\o_q B$ is in the $2q \g^\cB$-neighborhood of 
$ \cL_\cA^{1q} \o_q$. In the same way
$A\o_q $ is in the $2q \g^\cA$-neighborhood of 
$\o_q \cL_\cB^{1q}$ for every $A \in \cL_\cA^{1q}$.
We find in this way that
\[
\mathrm{reach}(\Pi^q) \ \leq \ 2q \max \{ \g^\cA, \g^\cB\}  .
\]

We now consider the height of $\Pi^q$. We argue
much as in the discussion of height before Definition \ref{defexp}. 
For any $\mu \in S(M_q(\cA))$
its composition with $\Phi^\cA_q$ is an element, $\phi_\mu$, of $S_1(\o_q)$,
specifically defined by
\[
\phi_\mu(D) = \mu(\Phi^\cA_q(D)) = \mu(r_\o^{-1} E^\cA_q(\o_q D\o_q))   .
\]
We take $\phi_\mu |_{M_q(\cA)}$ as an approximation to $\mu$, 
and estimate
the distance between these elements of $S(M_q(\cA))$. 
For $A \in \cL_\cA^{1q}$ we calculate
\[
|\mu(A) - \phi_\mu(A)| = |\mu(A) - \mu(\Phi^\cA_q(A))|
\leq \|A - \Phi^\cA_q(A)\|  . 
\]
Again I don't see any better way to bound this in terms of the data used
in Theorem \ref{thmadmis} for $\Pi$ than by using an entry-wise estimates.  
For $A \in M_q(\cA)$ with $A = [a_{jk}]$ we find (by using arguments
as above to deal with the fact that the $a_{jk}$'s need
not be self-adjoint) that the last
displayed term above is
\[
\leq q \max_{jk}\{\|a_{jk} - \Phi^\cA(a_{jk})\|\}
\leq 2q \d^\cA \max \{L^\cA(a_{jk})\} .
\]
But again
$\{L^\cA_n\}$ is a matrix slip-norm, and so by the
first property of such seminorms given in Definition \ref{mtx} we have
\[
 \max \{L^\cA(a_{jk})\}  \leq L^\cA_q(A) . 
\]
Since our assumption is that $A \in \cL_\cA^{1q}$,
we see in this way that
\[
\rho_{L^\cA_q}(\mu, \ \phi_\mu |_\cA) \leq 2q\d^\cA  .
\]
Thus $S(M_q(\cA))$ is in the $2q\d^\cA$-neighborhood of the restriction
to $M_q(\cA)$ of $S_1(\o_q)$.

We find in the same way that $S(M_q(\cB))$ is in the $2q\d^\cB$-neighborhood of the restriction
to $M_q(\cB)$ of $S_1(\o_q)$. Consequently,
\[
\mathrm{height}(\Pi^q) \ \leq \ 2q \max \{ \d^\cA, \d^\cB\}  .
\]

We can instead use $\hat \d^\cA$ in the way done in Proposition
\ref{alth}. Using reasoning much like that used above, we find
that for any $B \in M_q(\cB)$ we have:

\begin{eqnarray*}
\|B-\Phi^\cB_q(\Phi^\cA_q(B))\| \leq q \max_{jk}\{\|b_{jk} - \Phi^\cB(\Phi^\cA(b_{jk}))\|  \\
\leq 2q \hat\d^\cB \max\{L^\cB(b_{jk})\} \leq 2q\hat\d^\cB L^\cB_q(B)
\end{eqnarray*}
Consequently we see that
\[
\mathrm{height}(\Pi^q) \leq 2q\max\{\hat\d^\cA, \hat\d^\cB\}  .
\]

We summarize what we have found by:

\begin{thm}
\label{thmmatad}
Let  $\{L^\cA_n\}$ and
$\{L^\cB_n\}$ be matrix slip-norms 
on unital C*-algebras $\cA$ and $\cB$,
and let  $\Pi = (\cD, \o, E^\cA, E^\cB)$ be a bridge with conditional expectations 
from $\cA$ to $\cB$ that is admissible for  $\{L^\cA_n\}$ and
$\{L^\cB_n\}$. For any fixed positive integer $q$ let $\Pi^q$ be the
corresponding bridge with conditional expectations 
from $M_q(\cA)$ to $M_q(\cB)$. Then 
\[
\mathrm{reach}(\Pi^q) \leq 2q\max\{\g^\cA, \g^\cB\},
\]
where as before
\[
\g^\cA = 
\sup\{\|a\o - \o\Phi^\cB(a)\|_\cD : a \in \cL^1_\cA \},
\]
and similarly for $\g^\cB$; while
\[
\mathrm{height}(\Pi^q) \leq 
2q\max\{ \min\{\d^\cA, \hat\d^\cA\},  \min\{\d^\cB\, \hat\d^\cB\}\}  ,
\]
where as before
\[
\d^\cA = \sup \{\|\Phi^\cA(a) - a\|: a \in \cL^1_\cA\}
\]
and  
\[
\hat\d^\cA = \sup \{\|a - \Phi^\cA(\Phi^\cB(a))\| : a \in \cL^1_\cA\},
\]
and similarly for $\d^\cB$ and $\hat \d^\cB$. 
Consequently
\[
\mathrm{length}(\Pi^q) \leq 
2q\max\{\g^\cA, \g^\cB, \min\{\d^\cA, \hat\d^\cA\},  \min\{\d^\cB\, \hat\d^\cB\}\}  .
\]
\end{thm}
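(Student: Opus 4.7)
The plan is to apply Theorem \ref{thmadmis} directly to the matricial bridge $\Pi^q = (M_q(\cD), \o_q, E_q^\cA, E_q^\cB)$ furnished by Proposition \ref{promatx}. By Definition \ref{matadm} this matricial bridge is admissible for $L_q^\cA$ and $L_q^\cB$, so Theorem \ref{thmadmis} already bounds $\mathrm{reach}(\Pi^q)$ and $\mathrm{height}(\Pi^q)$ by maxima of quantities $\g_q^\cA, \g_q^\cB, \d_q^\cA, \d_q^\cB, \hat\d_q^\cA, \hat\d_q^\cB$ defined with respect to the $q$-level data. It therefore suffices to show that each of these six quantities is dominated by $2q$ times its unmatricial counterpart.

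All six estimates follow one template. The key technical input is the entry-wise inequality $\|[c_{jk}]\| \leq q\max_{jk}\|c_{jk}\|$ for a $q \times q$ matrix with entries in a C*-algebra, obtained by decomposing the matrix into the $q$ sub-matrices indexed by the residue $j-k \bmod q$. Applied with $C = \Phi_q^\cA(B)\o_q - \o_q B$ for $B = [b_{jk}] \in \cL_\cB^{1q}$, and using $\Phi_q^\cA(B) = [\Phi^\cA(b_{jk})]$, this reduces the reach calculation to bounding each $\|\Phi^\cA(b_{jk})\o - \o b_{jk}\|$ in terms of $\g^\cB L^\cB(b_{jk})$; the first axiom of a matrix slip-norm (applied with $\a, \b$ the standard basis covectors) then gives $\max_{jk}L^\cB(b_{jk}) \leq L_q^\cB(B) \leq 1$, delivering the bound $2q\g^\cB$. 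Symmetrically one obtains the $\g^\cA$ side, and the same template handles the height via $\|A - \Phi_q^\cA(A)\|$ (entry-wise into $\d^\cA$) and the alternative bound via $\|B - \Phi_q^\cB(\Phi_q^\cA(B))\|$ (entry-wise into $\hat\d^\cB$), where the states $\phi_\mu = \mu \circ \Phi_q^\cA$ and $\psi_\nu = \nu \circ \Phi_q^\cB \circ \Phi_q^\cA$ lie in $S_1(\o_q)$ because $\o_q$ is a projection, exactly as in the discussion preceding Definition \ref{defexp} and in Proposition \ref{alth}.

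The one genuine subtlety, and the source of the factor $2$ in front of $q$, is that although $A$ and $B$ are self-adjoint their individual entries need not be. To apply the definitions of $\g^\cB$, $\d^\cA$, $\hat\d^\cB$, etc., which reference elements of $\cL_\cA^1$ or $\cL_\cB^1$, I will split each entry as $b_{jk} = b_r + i b_i$ with $b_r, b_i$ self-adjoint, invoke the $*$-seminorm property of $L^\cB$ to conclude $L^\cB(b_r), L^\cB(b_i) \leq L^\cB(b_{jk})$, and combine the two resulting estimates by the triangle inequality. Combining the reach and height bounds and taking the minimum in the height term (since both $\d$ and $\hat\d$ provide valid upper bounds) yields the asserted inequality for $\mathrm{length}(\Pi^q)$.

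I expect no substantive obstacle beyond the self-adjointness bookkeeping above; the matrix slip-norm axioms and the entry-wise norm inequality do essentially all the real work, and the admissibility hypothesis of Definition \ref{matadm} ensures that the approximants $\Phi_q^\cA(B)$ and $\Phi_q^\cB(A)$ indeed land in the relevant unit balls $\cL_\cA^{1q}$ and $\cL_\cB^{1q}$.
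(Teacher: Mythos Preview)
Your proposal is correct and follows essentially the same route as the paper: the entry-wise norm bound $\|[c_{jk}]\| \leq q\max_{jk}\|c_{jk}\|$, the real/imaginary split of each entry to handle non-self-adjointness (yielding the factor $2$), and axiom~(1) of Definition~\ref{mtx} to control $\max_{jk} L^\cB(b_{jk})$ by $L^\cB_q(B)$. The only cosmetic difference is that you phrase the argument as first invoking Theorem~\ref{thmadmis} at level $q$ and then bounding the resulting $q$-level constants, whereas the paper carries out the estimate directly; the content is identical.
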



\section{The application to the first class of basic examples}
\label{app1}

We now apply the above general considerations to our first
class of basic examples, described in Section \ref{basic1}.
Thus we have $G$, $\cH$, $\cB$, $P$, $H$, $\cA$, $\cD$
and $\o$ as defined there, as well as $L^\cA$ and $L^\cB$
given by equation \eqref{lipn}.
We proceed to obtain an upper bound for the length of
the bridge $\Pi = (\cD, \o)$, where $\cD = C(G/H, \cB)$
and $\o(x) = \a_x(P)$. We begin by considering
its reach.

As seen in Section \ref{brce} , 
for any $F \in \cD = C(G/H, \cB)$ we have  
\[
E^\cA(\o F\o)(x) = \t_\cB(F(x)\a_x(P))  .
\]
From this it is easily seen that $r_\o^{-1}$
is the dimension of $\cH$, and so $r_\o^{-1}\t_\cB$
is the usual unnormalized trace on $\cB$, which
we now denote by $\rtr_\cB$.
In particular, for any $T \in \cB$ we have
\begin{equation}
\Phi^\cA(T)(x) = r_\o^{-1}E^\cA(\o T\o)(x) = \mathrm{tr}_\cB(T\a_x(P))
\label{phia}
\end{equation}
for all $x \in G/H$. But this is exactly the covariant Berezin symbol of $T$ (for this general context) 
as defined early in section 1 of \cite{R7} 
and denoted there by $\s_T$. 
It is natural to put on $\cD$ the action $\l\otimes \a$ of $G$. One
then easily checks that $\Phi^\cA$ is equivariant for
$\l\otimes \a$ and $\l$. From this
it is easy to verify that 
\[
L^\cA(\Phi^\cA(T)) = L^\cA(\s_T) \leq L^\cB(T)
\]
for all $T \in \cB$, which is exactly the content of
proposition 1.1 of \cite{R7}. Thus that part of
admissibility is satisfied.

Now
\[
(\Phi^\cA(T)\o - \o T)(x) = \a_x(P)(\s_T(x)1_\cB - T).
\]
Consequently
\begin{equation*}
\|\Phi^\cA(T)\o - \o T\|_\cD = \sup\{\|\a_x(P)(\s_T(x)1_\cB - T)\|_\cB: x \in G/H\}  .
\end{equation*}
As discussed in the text before proposition 8.2 of \cite{R21}, by
equivariance this is
\[
= \sup\{\|P(\s_{\a_x(T)}1_\cB - \a_x(T))\|_\cB : x \in G\}  .
\]
Then because $\a_x$ is isometric on $\cB$ for $L^\cB$ 
(as well as for the norm), 
we find for our present example 
that $\g^\cB$, as defined in equation \eqref{gamb1}, is given by
\begin{IEEEeqnarray}{rCl}
\label{gamb}
\g^\cB & = & \sup\{\|\Phi^\cA(T)\o - \o T\|_\cD : T \in \cL^1_\cB \}   \\
& = & \sup\{\|P(\mathrm{tr}(PT) 1_\cB - T)\|_\cB : T \in \cL^1_\cB \}  .
\nonumber 
\end{IEEEeqnarray}
This last term is exactly 
the definition of $\g^\cB$ given
in proposition 8.2 of \cite{R21}.

We next consider $\g^\cA$. For any $f \in \cA$ we have
\begin{equation}
\Phi^\cB(f) = r_\o^{-1}E^\cB(f)      
= d_\cH\int_{G/H} f(x)\a_x(P) \ dx  .
\label{phib}
\end{equation}
where $d_\cH = \mathrm{dim}(\cH)$.
But this is exactly the formula used for the Berezin contravariant
symbol, as indicated in Section \ref{brce}. Early in section 2 of \cite{R7}  this $\Phi^\cB$ is denoted by
$\breve \s _f$, that is, 
\begin{equation}
\label{brsig}
\Phi^\cB(f) = \breve \s_f   
\end{equation}
for the present class of examples. One
easily checks that $\Phi^\cB$ is equivariant for
$\l\otimes \a$ and $\a$. From this
it is easy to verify that 
\[
L^\cB(\Phi^\cB(f)) = L^\cB(\breve \s_f) \leq L^\cA(f)  
\]
for all $f \in \cA$, as shown in section 2 of \cite{R7}.
Thus we obtain:
\begin{prop}
\label{adm1}
The bridge with conditional expectations \\
$\Pi = (\cD, \o, E^\cA, E^\cB)$ is admissible for
$L^\cA$ and $L^\cB$.
\end{prop}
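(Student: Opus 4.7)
The plan is to verify first that $\Pi = (\cD, \o, E^\cA, E^\cB)$ satisfies the three conditions of Definition \ref{defexp}, and then to check the two admissibility inequalities of Definition \ref{admis}. Both steps largely amount to collecting what has already been established in the preceding discussion.

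For the first step, I would note that $\o(x) = \a_x(P)$ is, pointwise, a rank-one projection, so $\o$ is a projection in $\cD$. The conditional expectations $E^\cA$ and $E^\cB$ are faithful because they are slice maps associated with the faithful tracial states $\t_\cB$ and $\t_\cA$. To identify $r_\o$, I would compute $E^\cA(\o)(x) = \t_\cB(\a_x(P)) = \t_\cB(P) = 1/d_\cH$ using traciality of $\t_\cB$, and for $E^\cB(\o) = \int_{G/H} \a_x(P)\,dx$ I would observe that the integral is $\a$-invariant hence a scalar multiple of $1_\cB$, whose value is again $1/d_\cH$ by applying $\t_\cB$. Thus $E^\cA(\o) = r_\o 1_\cA$ and $E^\cB(\o) = r_\o 1_\cB$ with $r_\o = 1/d_\cH$.

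For the admissibility inequalities, the decisive identifications have already been recorded in the paragraphs preceding the proposition: equation \eqref{phia} gives $\Phi^\cA(T) = \s_T$, the Berezin covariant symbol, and equation \eqref{phib} gives $\Phi^\cB(f) = \breve \s_f$, the Berezin contravariant symbol, in the sense of \cite{R7}. The inequality $L^\cA(\Phi^\cA(T)) = L^\cA(\s_T) \leq L^\cB(T)$ is the content of proposition 1.1 of \cite{R7}, proved via the equivariance of $\Phi^\cA$ for $\l\otimes\a$ on $\cD$ and $\l$ on $\cA$ together with formula \eqref{lipn}. The reverse inequality $L^\cB(\Phi^\cB(f)) = L^\cB(\breve \s_f) \leq L^\cA(f)$ is established in section 2 of \cite{R7} by the analogous equivariance of $\Phi^\cB$ for $\l\otimes\a$ on $\cD$ and $\a$ on $\cB$.

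There is no serious obstacle: the only point genuinely requiring care is the reconciliation between the abstract maps $\Phi^\cA$, $\Phi^\cB$ of Section \ref{brce} and the classical Berezin symbols, and this reconciliation is precisely what equations \eqref{phia} and \eqref{phib} record. With those identifications in hand, the proposition is immediate from the cited Lip-norm contraction estimates for the Berezin transforms in \cite{R7}.
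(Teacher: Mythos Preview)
Your proposal is correct and follows essentially the same route as the paper: the argument in the paper is precisely the text surrounding equations \eqref{phia} and \eqref{phib}, identifying $\Phi^\cA$ and $\Phi^\cB$ with $\s$ and $\breve\s$ and then invoking the equivariance-based Lip-norm contraction results from \cite{R7}. Your additional verification of the three conditions of Definition \ref{defexp} is harmless but redundant here, since those were already established in Section \ref{brce} and the proposition only asserts admissibility.
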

Much as in the statement of proposition 8.1 of \cite{R21} set 
\begin{equation}
\label{gama}
\breve\g^\cA = d_\cH \int \rho_{G/H}(e, y)\|P\a_y(P)\| dy  .
\end{equation}
In the proof of proposition 8.1 of \cite{R21} 
(given just before the statement of the proposition,
and where $\breve\g^\cA$ is denoted
just by $\g^\cA$) it is shown, with different notation,  
that
\begin{equation}
\|f\o - \o \breve \s_f\| \leq \breve\g^\cA L^\cA(f) .
\label{gamin}
\end{equation}
Thus if $f \in \cL^1_\cA$ then
\[
\|f\o - \o \Phi^\cB(f)\| \leq \breve\g^\cA .
\]
It follows that $\g^\cA  \leq \breve\g^\cA $. We
have thus obtained:

\begin{prop}
\label{prore1}
For the present class of examples, with notation as above,
we have
\[
\mathrm{reach}(\Pi) \leq  
\max\{\g^\cA, \g^\cB\} \leq  \max\{\breve\g^\cA, \g^\cB\}
\]
where $\breve\g^\cA$ is defined in equation \ref{gama}
and $\g^\cB$ is defined in equation \ref{gamb} 
(and \ref{gamb1}) above.
\end{prop}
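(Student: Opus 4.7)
The plan is straightforward: both inequalities have already been essentially established in earlier discussion, and the proof is an assembly of those results. I will handle the two inequalities in turn.

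For the first inequality, $\mathrm{reach}(\Pi) \leq \max\{\g^\cA, \g^\cB\}$, the strategy is simply to invoke Theorem \ref{thmadmis}. That theorem requires the bridge with conditional expectations $\Pi = (\cD, \o, E^\cA, E^\cB)$ to be admissible for the slip-norms $L^\cA$ and $L^\cB$, which is exactly the content of Proposition \ref{adm1}. So this step is a direct citation and requires no further work.

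For the second inequality $\max\{\g^\cA, \g^\cB\} \leq \max\{\breve\g^\cA, \g^\cB\}$, since $\g^\cB$ appears unchanged on both sides, it suffices to establish $\g^\cA \leq \breve\g^\cA$. Using equation \eqref{brsig}, which identifies $\Phi^\cB(f)$ with the contravariant Berezin symbol $\breve\s_f$, the definition of $\g^\cA$ from Theorem \ref{thmadmis} becomes
\[
\g^\cA = \sup\{\|f\o - \o\breve\s_f\|_\cD : f \in \cL^1_\cA\}.
\]
The key input is inequality \eqref{gamin}, which (as noted in the text, extracting the estimate from the proof of proposition 8.1 of \cite{R21}) gives $\|f\o - \o\breve\s_f\| \leq \breve\g^\cA L^\cA(f)$ for every $f \in \cA$. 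Taking the supremum over $f \in \cL^1_\cA$, for which by definition $L^\cA(f) \leq 1$, yields $\g^\cA \leq \breve\g^\cA$ immediately.

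Putting the two inequalities together gives the stated chain $\mathrm{reach}(\Pi) \leq \max\{\g^\cA, \g^\cB\} \leq \max\{\breve\g^\cA, \g^\cB\}$. There is no real obstacle to overcome here, since the genuine analytical content — the admissibility of $\Pi$ and the integral estimate leading to \eqref{gamin} — has been dealt with prior to the statement. The only point worth flagging is that one must use the identification $\Phi^\cB = \breve\s_{(\cdot)}$ from \eqref{brsig} to recognize that the quantity bounded in \eqref{gamin} is precisely the quantity whose supremum defines $\g^\cA$ in the framework of bridges with conditional expectations.
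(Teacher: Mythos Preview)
Your proposal is correct and follows essentially the same approach as the paper: the proposition is presented there as a summary of the preceding discussion, obtaining the first inequality from admissibility (Proposition \ref{adm1}) via Theorem \ref{thmadmis}, and the second from the observation that \eqref{gamin} together with the identification $\Phi^\cB(f)=\breve\s_f$ gives $\g^\cA\le\breve\g^\cA$.
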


Even more, for the case mentioned
at the beginning of Section \ref{basic2} in which $G$ is a 
compact semisimple Lie group and
$\l$ is
a positive integral weight, for each positive integer $m$ 
let $(\cH^m, U^m)$ be the irreducible representation of $G$
with highest weight $m\l$. Then let $\cB^m = \cL(\cH^m)$
with action $\a$ of $G$, and let $P^m$ be the projection on the
highest weight vector in $\cH^m$. All the $P^m$'s will have
the same $\a$-stability group, $H$. As before, we let
$\cA = C(G/H)$. Then for each $m$ we can construct
as in Section \ref{brce} the bridge with conditional 
expectations, $\Pi_m = (\cD_m, \o^m,  E^\cA_m, E^\cB_m)$.
From a fixed length function $\ell$ on $G$ we will obtain
Lip-norms $\{L^{\cB^m}\}$ which together
with $\{L^{\cA}\}$ give meaning to the lengths of the bridges
$\Pi_m$. In turn the constants $\g^\cA_m$, $\breve \g^\cA_m$, 
$\g^{\cB^m}$, $\d^\cA_m$, $\d^{\cB^m}$ will be defined.

Now it follows from the discussion of $\breve\g^\cA$
above that $\g^\cA_m \leq \breve\g^\cA_m$ for each $m$.
But section 10 of \cite{R21} gives a proof 
that the sequence
$\breve \g^\cA_m$ converges to 0 as $m$ goes to $\infty$.
It follows that  $\g^\cA_m$ 
converges to 0 as $m$ goes to $\infty$.
Then section 12 of \cite{R21} gives a proof that the sequence
$\g^{\cB^m}$ converges to 0 as $m$ goes to $\infty$.
Putting together these results for $\g^\cA_m$
and $\g^{\cB^m}$, we obtain:

\begin{prop}
\label{progam}
The reach of the bridge $\Pi_m$ goes to 0 as $m$ goes to $\infty$.
\end{prop}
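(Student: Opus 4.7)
The plan is to combine Proposition \ref{prore1} with the convergence results already established for the quantities $\breve \g^\cA_m$ and $\g^{\cB^m}$ in \cite{R21}. Specifically, applying Proposition \ref{prore1} to each bridge $\Pi_m$ gives
\[
\mathrm{reach}(\Pi_m) \ \leq \ \max\{\g^\cA_m, \g^{\cB^m}\} \ \leq \ \max\{\breve\g^\cA_m, \g^{\cB^m}\},
\]
where the second inequality uses the estimate $\g^\cA_m \leq \breve\g^\cA_m$ obtained from the Berezin-contravariant-symbol inequality \eqref{gamin}. So it suffices to show that each of the two sequences on the right tends to $0$.

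For the first sequence, I would invoke the convergence argument of section 10 of \cite{R21}, which shows that $\breve\g^\cA_m \to 0$ as $m \to \infty$. Recall from \eqref{gama} that $\breve\g^\cA_m = d_{\cH^m} \int \rho_{G/H}(e,y)\|P^m\a_y(P^m)\|\,dy$; the content of the cited argument is that as the highest weight $m\l$ grows, the coherent-state overlap function $y \mapsto \|P^m\a_y(P^m)\|$ concentrates near the identity coset rapidly enough that, even when weighted by the unbounded factor $d_{\cH^m}$ and the distance $\rho_{G/H}(e,y)$, the integral vanishes in the limit. Since $\g^\cA_m \leq \breve\g^\cA_m$, the same conclusion holds for $\g^\cA_m$.

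For the second sequence, I would quote the proof in section 12 of \cite{R21} that $\g^{\cB^m} \to 0$. Recall from \eqref{gamb} that this quantity measures the norm-discrepancy, uniformly over $T \in \cL^1_{\cB^m}$, between $T$ and its Berezin covariant symbol viewed back in $\cB^m$ via the rank-one projection $P^m$; the convergence encodes the fact that for large $m$ the Berezin covariant/contravariant symbols become approximately inverse on the Lipschitz ball.

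Combining these two facts, both $\g^\cA_m$ and $\g^{\cB^m}$ tend to $0$, hence so does $\max\{\g^\cA_m, \g^{\cB^m}\}$, and therefore $\mathrm{reach}(\Pi_m) \to 0$ as $m \to \infty$. There is no genuine obstacle at this stage of the paper: the proof is a bookkeeping combination of Proposition \ref{prore1} with two convergence theorems already present in \cite{R21}. The real analytic work—showing that the two Berezin-symbol constants vanish in the large-$m$ limit—has been carried out there, and this proposition simply repackages those results in the bridge-with-conditional-expectations language developed here.
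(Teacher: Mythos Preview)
Your proposal is correct and follows essentially the same approach as the paper: apply Proposition \ref{prore1} to bound $\mathrm{reach}(\Pi_m)$ by $\max\{\breve\g^\cA_m, \g^{\cB^m}\}$, then invoke section~10 of \cite{R21} for $\breve\g^\cA_m \to 0$ and section~12 of \cite{R21} for $\g^{\cB^m} \to 0$. The paper's argument is exactly this bookkeeping combination, stated in the paragraph immediately preceding the proposition.
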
 

We now consider the height of $\Pi$. For $\d^\cA$ something
quite special happens. It is easily seen that $\cA = C(G/H)$ is
the center of $\cD = \cA \otimes \cB$, and so all
elements of $\cA$ commute with $\o$. Thus we can
apply Proposition \ref{abel} to conclude that
$\d^\cA = 0$.

In order to deal with $\cB$ we use $\hat \d^\cB$ of 
Notation \ref{bert} and the discussion surrounding it.
For any $T \in \cB$ we have
\begin{align*}
\Phi^\cB(\Phi^\cA(T)) &= r_\o^{-1}E^\cB(\o (r_\o^{-1}E^\cA(\o T \o)) \o)  \\
&= d_\cH \int \a_x(P) (d_\cH \t_\cB(\a_x(P)T\a_x(P))\a_x(P) dx  \\
&= d_\cH \int \a_x(P) (tr_\cB(\a_x(P)T) dx = \breve \s(\s_T)  .
\end{align*}
where for the last term we use notation
from \cite{R7,R21}. The term $\breve \s(\s_T)$ plays an important
role there. See  theorem 6.1 of \cite{R7} 
and theorem 11.5 of \cite{R21}. 
The $\hat \d^\cB$ of our Notation \ref{bert} is for the present
class of examples exactly the $\d^\cB$
of notation 8.4 of \cite{R21}. For use in the next section we
here denote it by $\tilde \d^\cB$, that is:
\begin{notation}
\label{tdel}
For $G$, $\cA = C(G/H)$, $\cB = \cL(\cH)$, 
$\s$, $\breve \s$, etc as above,
we set
\[
\tilde \d^\cB = \sup \{\|T - \breve \s(\s_T)\| : T \in \cL^1_\cB\}.
\]
\end{notation}
When we combine this with Propositions \ref{alth} and \ref{prore1} we
obtain:
\begin{thm}
\label{thmprod}
For the present class of examples, 
with notation as above, we have
\[
\mathrm{height}(\Pi) \leq \tilde \d^\cB   .
\]
Consequently
\[
\mathrm{length}(\Pi)  \leq \max\{\g^\cA, \g^\cB, \min\{\d^\cB, \tilde \d^\cB\} 
\leq \max\{\breve\g^\cA, \g^\cB, \min\{\d^\cB, \tilde \d^\cB\} \}  .
\]
\end{thm}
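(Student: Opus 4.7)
The plan is to assemble the theorem from ingredients already in place: Proposition \ref{alth} for the height bound, Proposition \ref{abel} to force $\d^\cA=0$, the explicit formula for $\Phi^\cB\circ\Phi^\cA$ just computed above the theorem statement, and Proposition \ref{prore1} to control $\g^\cA$ by $\breve\g^\cA$.

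First I would verify that $\d^\cA=0$. Since $\cD=\cA\otimes\cB=C(G/H,\cB)$ with $\cA$ embedded as $\cA\otimes 1_\cB$, every element of $\cA$ commutes with $\o$ when both are viewed inside $\cD$. Thus Proposition \ref{abel} applies and gives $\Phi^\cA(a)=a$ for all $a\in\cA$, so $\d^\cA=0$. Next I would identify $\hat\d^\cB$ with $\tilde\d^\cB$ of Notation \ref{tdel}: the chain of equalities carried out immediately before the theorem statement shows that $\Phi^\cB(\Phi^\cA(T))=\breve\s(\s_T)$ for every $T\in\cB$, so $\sup\{\|T-\Phi^\cB(\Phi^\cA(T))\|:T\in\cL^1_\cB\}=\tilde\d^\cB$.

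Combining these two facts with Proposition \ref{alth} gives
\[
\mathrm{height}(\Pi) \leq \max\{\min\{\d^\cA,\hat\d^\cA\},\ \min\{\d^\cB,\hat\d^\cB\}\} = \min\{\d^\cB,\tilde\d^\cB\}\leq \tilde\d^\cB,
\]
which is the first assertion. For the length bound I would use $\mathrm{length}(\Pi)=\max\{\mathrm{reach}(\Pi),\mathrm{height}(\Pi)\}$, insert the reach estimate $\mathrm{reach}(\Pi)\leq\max\{\g^\cA,\g^\cB\}$ from Theorem \ref{thmadmis} (whose admissibility hypothesis is supplied by Proposition \ref{adm1}), and feed in the height bound just obtained to get the first of the two displayed inequalities. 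The second follows from $\g^\cA\leq\breve\g^\cA$, which was established in Proposition \ref{prore1}.

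There is no real obstacle here: every ingredient has been prepared in the earlier sections, and the one thing that truly uses the structure of this first class of examples is the commutativity of $\cA$ with $\o$ inside $\cD$, which is immediate from the tensor-product form $\cD=\cA\otimes\cB$. The value of the theorem lies not in the proof but in recognizing that the abstract quantities $\hat\d^\cB$ and $\g^\cA$ coincide with (respectively are bounded by) the concrete quantities $\tilde\d^\cB$ and $\breve\g^\cA$ of \cite{R7,R21}, which have already been analyzed asymptotically there.
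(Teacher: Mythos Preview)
Your proposal is correct and follows essentially the same route as the paper: use Proposition~\ref{abel} (via the centrality of $\cA$ in $\cD=\cA\otimes\cB$) to get $\d^\cA=0$, identify $\hat\d^\cB$ with $\tilde\d^\cB$ through the computation $\Phi^\cB\circ\Phi^\cA=\breve\s\circ\s$, then combine Proposition~\ref{alth} with Proposition~\ref{prore1}. The only cosmetic difference is that the paper cites Proposition~\ref{prore1} directly for the reach bound (which already packages Theorem~\ref{thmadmis} and the admissibility of Proposition~\ref{adm1}), whereas you unpack these ingredients separately.
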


We will indicate in Section \ref{trek} \Lat's definition of his
propinquity between compact quantum metric spaces, but
it is always no larger than the length of any bridge between
the two spaces. He denotes his propinquity simply by $\L$,
but we will denote it here by ``Prpq''. Consequently,
from the above theorem we obtain:
\begin{cor} 
\label{corpr}
With notation as above, 
\[
\mathrm{Prpq}((\cA, L^\cA), \ (\cB, L^\cB)) \leq  
\max\{\breve\g^\cA, \g^\cB, \min\{\d^\cB, \tilde \d^\cB\} \}  .
\]
\end{cor}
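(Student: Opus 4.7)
\smallskip

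The plan is to observe that this corollary is essentially a direct consequence of Theorem \ref{thmprod}, combined with the defining property of \Lat's propinquity. Specifically, the propinquity $\mathrm{Prpq}$ between two compact quantum metric spaces is defined (as mentioned in the paragraph preceding the statement, and to be recalled in Section \ref{trek}) as an infimum of the lengths of bridges between them, taken over some admissible class of bridges. Hence for any particular bridge that qualifies, the propinquity is bounded above by its length.

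Accordingly, I would proceed in three short steps. First, I would invoke the construction of Section \ref{basic1}: the bridge $\Pi = (\cD, \o)$ with $\cD = C(G/H, \cB)$ and $\o(x) = \a_x(P)$ is a genuine bridge from $\cA$ to $\cB$ in \Lat's sense, and $(\cA, L^\cA)$, $(\cB, L^\cB)$ are compact C*-metric spaces, so $\Pi$ belongs to the class over which propinquity takes its infimum. Second, I would apply Theorem \ref{thmprod} to get the upper bound
\[
\mathrm{length}(\Pi) \leq \max\{\breve\g^\cA, \g^\cB, \min\{\d^\cB, \tilde \d^\cB\}\}.
\]
Third, I would conclude by the general inequality $\mathrm{Prpq}((\cA, L^\cA), (\cB, L^\cB)) \leq \mathrm{length}(\Pi)$.

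The main (and really the only) obstacle is a bookkeeping one: one must be sure that the slip-norms and pivot conditions that we have been working with here match the hypotheses that \Lat \ requires on the bridges entering into the definition of propinquity. For our setting both $L^\cA$ and $L^\cB$ are in fact Lip-norms making $(\cA,L^\cA)$ and $(\cB,L^\cB)$ compact C*-metric spaces (as noted in Section \ref{basic1}), the pivot $\o$ is a projection with $\|\o\|=1$ and $1\in\mathrm{spec}(\o)$, and the injections $\pi_\cA,\pi_\cB$ are unital $*$-monomorphisms, so $\Pi$ qualifies as a bridge in \Lat's sense. Once this is checked, the corollary follows immediately from Theorem \ref{thmprod}, and no further calculation is required.
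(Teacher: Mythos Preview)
Your proposal is correct and follows essentially the same approach as the paper: the paper simply notes that propinquity is never larger than the length of any bridge between the two spaces, and then invokes Theorem \ref{thmprod}. Your additional verification that $\Pi$ qualifies as a bridge in \Lat's sense is a reasonable bit of bookkeeping that the paper leaves implicit; one minor terminological point is that propinquity is actually an infimum over \emph{treks} (finite paths of bridges) rather than single bridges, but since a single bridge is a trek this does not affect the argument.
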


For
the case of highest weight representations discussed just
above, theorem 11.5 of \cite{R21} gives a proof 
that the sequence
$\tilde \d^\cB_m$ (in our notation) 
converges to 0 as $m$ goes to $\infty$.
It follows from the above proposition that:
\begin{prop}
\label{prodel}
The height of the bridge $\Pi_m$ goes to 0 as $m$ goes to $\infty$.
\end{prop}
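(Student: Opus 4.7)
The plan is to combine the bound on height obtained in Theorem \ref{thmprod} with the convergence statement from \cite{R21} that has just been quoted. Specifically, for each positive integer $m$ I would apply Theorem \ref{thmprod} to the bridge with conditional expectations $\Pi_m = (\cD_m, \o^m, E^\cA_m, E^\cB_m)$ associated to the highest-weight representation $(\cH^m, U^m)$. This yields the estimate
\[
\mathrm{height}(\Pi_m) \leq \tilde \d^{\cB^m}_m,
\]
where $\tilde \d^{\cB^m}_m$ is the quantity defined, for this $m$-th example, as in Notation \ref{tdel}, namely the supremum of $\|T - \breve\s(\s_T)\|$ as $T$ ranges over $\cL^1_{\cB^m}$.

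The second step is to invoke the result from \cite{R21} (theorem 11.5 there) that was recalled just before the statement, which asserts that $\tilde \d^{\cB^m}_m \to 0$ as $m \to \infty$. This is the content that does the actual analytic work: it quantifies how closely the composition $\breve\s \circ \s$ of the Berezin covariant and contravariant symbol maps approaches the identity on the self-adjoint unit ball of $\cB^m$ as the irreducible representation becomes large. Once this input is granted, combining it with the displayed inequality above immediately gives $\mathrm{height}(\Pi_m) \to 0$, which is the desired conclusion.

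Thus the proof is a two-line deduction: cite Theorem \ref{thmprod} to pass from height to $\tilde \d^{\cB^m}_m$, then cite theorem 11.5 of \cite{R21} for the convergence. The only potential obstacle is a bookkeeping one — making sure that the constant $\tilde \d^{\cB^m}_m$ defined via Notation \ref{tdel} for each individual $m$ in our framework genuinely agrees with the quantity whose convergence is established in \cite{R21}. But this identification was already carried out in the discussion surrounding equations \eqref{phia}, \eqref{phib} and \eqref{brsig}, where $\Phi^\cA$ and $\Phi^\cB$ were shown to coincide with $\s$ and $\breve\s$, so that $\Phi^{\cB^m}(\Phi^\cA(T)) = \breve\s(\s_T)$ and the two constants match without further adjustment.
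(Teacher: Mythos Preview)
Your proposal is correct and follows essentially the same approach as the paper: apply Theorem \ref{thmprod} to bound $\mathrm{height}(\Pi_m)$ by $\tilde\d^{\cB^m}$, then invoke theorem 11.5 of \cite{R21} for the convergence $\tilde\d^{\cB^m}\to 0$. Your additional remark verifying that the constant in Notation \ref{tdel} matches the one in \cite{R21} is a reasonable sanity check, though the paper treats this identification as already established.
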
 

Combining this with Proposition \ref{progam}, we obtain:

\begin{thm}
\label{thmht}
The length of the bridge $\Pi_m$ goes to 0 as $m$ goes to $\infty$.
Consequently $\mathrm{Prpq}((\cA, L^\cA), \ (\cB^m, L^{\cB^m}))$
goes to 0 as $m$ goes to $\infty$.
\end{thm}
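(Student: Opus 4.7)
The plan is to observe that this theorem is essentially a packaging of the two preceding propositions together with the definition of length and the relationship between length and propinquity. There is no new technical content to produce; all of the analytic work has already been done in Sections 10, 11, and 12 of \cite{R21} (invoked via Propositions \ref{progam} and \ref{prodel}), and all that remains is to assemble the bounds.

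First I would recall that by Definition \ref{height} the length of any bridge is the maximum of its reach and its height. Applying this to $\Pi_m$, I would write
\[
\mathrm{length}(\Pi_m) = \max\{\mathrm{reach}(\Pi_m), \mathrm{height}(\Pi_m)\}.
\]
Proposition \ref{progam} gives $\mathrm{reach}(\Pi_m) \to 0$ as $m \to \infty$, and Proposition \ref{prodel} gives $\mathrm{height}(\Pi_m) \to 0$. Since the maximum of two sequences both tending to $0$ tends to $0$, we conclude $\mathrm{length}(\Pi_m) \to 0$. Alternatively, one could cite Theorem \ref{thmprod} directly, noting that $\breve\g^\cA_m$, $\g^{\cB^m}$, and $\tilde\d^\cB_m$ all converge to $0$ (by sections 10, 11, and 12 of \cite{R21}), so the right-hand side bound on $\mathrm{length}(\Pi_m)$ tends to $0$.

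For the second assertion, I would invoke the defining property of \Lat's propinquity: for any bridge $\Pi$ between compact quantum metric spaces $(\cA,L^\cA)$ and $(\cB^m, L^{\cB^m})$, one has
\[
\mathrm{Prpq}((\cA, L^\cA), (\cB^m, L^{\cB^m})) \le \mathrm{length}(\Pi_m),
\]
as noted in the discussion preceding Corollary \ref{corpr}. Since the right-hand side tends to $0$, so does the propinquity.

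There is no real obstacle here; the only delicate point is making sure the hypotheses of Propositions \ref{progam} and \ref{prodel} apply in the present setting of highest-weight representations $(\cH^m, U^m)$ with a common stability subgroup $H$, which is exactly the situation described at the end of Section \ref{app1}. Since the preceding propositions were stated for precisely this family, no additional verification is needed. The proof therefore consists of at most a few lines assembling these facts.
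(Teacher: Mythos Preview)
Your proposal is correct and matches the paper's own approach exactly: the theorem is stated immediately after Proposition \ref{prodel} with the single remark ``Combining this with Proposition \ref{progam}, we obtain,'' i.e., length is the max of reach and height, both of which have just been shown to tend to $0$, and propinquity is bounded by the length of any bridge. There is nothing to add.
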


We now treat the matricial case, beginning with the general
situation in which $G$ is some compact group. We must
first specify our matrix slip-norms. This is essentially done
in example 3.2 of \cite{Wuw2} and section 14 of \cite{R21}.
As discussed in Section \ref{basic1}, we have the 
actions $\l$ and $\a$ on $\cA = C(G/H)$ and $\cB = \cB(\cH)$
respectively. For any $n$ let $\l^n$ and $\a^n$ be the
corresponding actions $\iota_n\otimes \l$ and $\iota_n \otimes \a$
on $M_n\otimes \cA = M_n(\cA)$ and $M_n\otimes \cB = M_n(\cB)$.
We then use the length function $\ell$ and formula \ref{lipn} to
define seminorms $L^\cA_n$ and $L^\cB_n$ on $M_n(\cA)$
and $M_n(\cB)$. It is easily verified that $\{L^\cA_n\}$ and
$\{L^\cB_n\}$ are matrix slip-norms. Notice that here $L_1^\cA = L^\cA$
and $L_1^\cB = L^\cB$ are actually lipnorms, and so, by property
1 of Definition \ref{mtx}, for each $n$ the null-spaces of $L^\cA_n$ and
$L^\cB_n$ are exactly $M_n$.

Now fix $q$ and take $n=q$.  From our bridge with conditional
expectations
$\Pi = (\cD, \o, E^\cA, E^\cB)$ we define the bridge
with conditional expectations
$\Pi^q = (M_q(\cD), \o_q, E_q^\cA, E_q^\cB)$ 
between $M_q(\cA)$ and $M_q(\cB)$ in the
way done in Proposition \ref{promatx}. We then define
$\Phi^\cA_q = \iota_q \otimes \Phi^\cA$,
and similarly for $\Phi^\cB_q$ as done right after
Proposition \ref{promatx}.

Because $\l$ and $\a$ and $\Phi^\cA$ and $\Phi^\cB$
act entry-wise on $M_q(\cD)$, and because $\Phi^\cA$ and $\Phi^\cB$
are equivariant for $\l\otimes\a$ and $\l$ and for $\l\otimes\a$ and  $\a$ 
respectively, it is easily seen that
$\Pi^q$ is admissible for $\{L^\cA_q\}$ and
$\{L^\cB_q\}$.  

We are thus in position to apply Theorem \ref{thmmatad}.
From it and Theorem \ref{thmprod} we conclude that:

\begin{thm}
\label{thmmat1}
With notation as above, we have
\[
\mathrm{reach}(\Pi^q) \leq 2q\max\{\g^\cA, \g^\cB\}
\leq 2q\max\{\breve\g^\cA, \g^\cB\},
\]
where $\breve\g^\cA$ is defined by formula \ref{gama}.
Furthermore
\[
\mathrm{height}(\Pi^q) \leq 2q \min\{\d^\cB, \tilde \d^\cB\},
\]
where $\tilde \d^\cB$ is defined in Notation \ref{tdel}.
Thus
\[
\mathrm{length}(\Pi^q) \leq 2q 
\max\{\breve\g^\cA, \g^\cB,  \min\{\d^\cB, \tilde \d^\cB\} \} .
\]
\end{thm}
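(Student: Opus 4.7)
The plan is to combine the admissibility already observed for $\Pi^q$ with the matricial bounds of Theorem \ref{thmmatad}, and then import the specializations from Theorem \ref{thmprod} that are peculiar to the present class of examples.

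First, I would verify that $\Pi^q$ is admissible for the matrix slip-norms $\{L^\cA_n\}$ and $\{L^\cB_n\}$. This follows from the fact that $\Phi^\cA$ and $\Phi^\cB$ act entry-wise under $\iota_q\otimes(\cdot)$, and are equivariant for $\l\otimes\a$ with $\l$, respectively $\a$, as noted in the paragraph preceding the theorem statement. Since $L^\cA_q$ and $L^\cB_q$ are themselves built via formula \eqref{lipn} from the entry-wise actions $\l^q = \iota_q\otimes\l$ and $\a^q = \iota_q\otimes\a$, the scalar admissibility inequalities $L^\cA(\Phi^\cA(T))\leq L^\cB(T)$ and $L^\cB(\Phi^\cB(f))\leq L^\cA(f)$ obtained from Proposition \ref{adm1} pass through the tensor factor $\iota_q$, giving the required matricial admissibility.

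Once admissibility is in hand, Theorem \ref{thmmatad} yields immediately
\[
\mathrm{reach}(\Pi^q) \leq 2q\max\{\g^\cA, \g^\cB\}, \qquad
\mathrm{height}(\Pi^q) \leq 2q\max\{\min\{\d^\cA,\hat\d^\cA\}, \min\{\d^\cB,\hat\d^\cB\}\}.
\]
For the reach bound, I would then invoke the inequality $\g^\cA\leq\breve\g^\cA$ established in the discussion surrounding equation \eqref{gamin} to replace $\g^\cA$ by $\breve\g^\cA$ and recover the stated form. For the height bound, the key simplification is that in the present class of examples $\cA=C(G/H)$ is the center of $\cD=\cA\otimes\cB$, so every element of $\cA$ commutes with $\o$; Proposition \ref{abel} then gives $\d^\cA=0$, which collapses $\min\{\d^\cA,\hat\d^\cA\}$ to $0$ and reduces the outer maximum to $\min\{\d^\cB,\hat\d^\cB\}$. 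Finally, the identification $\hat\d^\cB = \tilde\d^\cB$ recorded in Notation \ref{tdel} (as computed in the discussion preceding Theorem \ref{thmprod}, where $\Phi^\cB\circ\Phi^\cA(T) = \breve\s(\s_T)$) gives the stated bound $2q\min\{\d^\cB,\tilde\d^\cB\}$ on the height. Combining the two estimates via $\mathrm{length}(\Pi^q)=\max\{\mathrm{reach}(\Pi^q),\mathrm{height}(\Pi^q)\}$ produces the length bound.

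The only step that requires any real care is verifying matricial admissibility, since a matrix slip-norm need not be determined by its scalar level (as remarked after Proposition \ref{promatx}). However, here no difficulty arises because the matrix slip-norms on both $\cA$ and $\cB$ are defined by applying formula \eqref{lipn} to the entry-wise actions $\l^n$ and $\a^n$, and $\Phi^\cA_n,\Phi^\cB_n$ act entry-wise and intertwine these actions; the scalar admissibility therefore lifts unchanged. All the remaining work is bookkeeping: substituting $\d^\cA=0$, $\hat\d^\cB=\tilde\d^\cB$, and $\g^\cA\leq\breve\g^\cA$ into the conclusions of Theorem \ref{thmmatad}.
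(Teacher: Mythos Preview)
Your proposal is correct and follows essentially the same approach as the paper: apply Theorem \ref{thmmatad} once matricial admissibility is verified, then substitute the specializations $\g^\cA\leq\breve\g^\cA$, $\d^\cA=0$, and $\hat\d^\cB=\tilde\d^\cB$ from the discussion culminating in Theorem \ref{thmprod}. The paper's own justification is terse (``From it and Theorem \ref{thmprod} we conclude that:''), and you have simply spelled out the bookkeeping.
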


We remark that we could improve slightly on the above theorem by
using a calculation given in section 14 of \cite{R21} 
in the middle of the discussion there of Wu's results.
Let $F \in M_q(\cA)$ be given, with $F = \{f_{jk}\}$,
and set 
\[
t_{jk} = \breve \s_{f_{jk}} = d_\cH \int f_{jk}(y)\a_y(P) dy  ,
\]
and let $T = \{t_{jk}\}$. Then
\begin{eqnarray*}
(F\o_q - \o_q T)(x)  = \{\a_x(P)(f_{jk}(x) - d_\cH \int f_{jk}(y)\a_y(P) dy)\}  \\
=\{d_\cH \int (f_{jk}(x) - f_{jk}(y))\a_x(P)\a_y(P) dy\}  .
\end{eqnarray*}
To obtain a bound on $\g^\cA_q$ we need to take the supremum
of the norm of this expression over all $x$ and over all $F$
with $L^\cA_q(F) \leq 1$. By translation by $x$, in the way done
shortly before proposition 8.1 of \cite{R21}, it suffices to
consider 
\begin{align*}
\sup\{ \| \{d_\cH & \int (f_{jk}(e) - f_{jk}(y))P\a_y(P) dy\} \| \}   \\
&\leq d_\cH \int \| F(e) - F(y)\| \|P\a_y(P)\| dy   \\
&\leq L^\cA_q(F) d_\cH \int \rho_{G/H}(e, y) \| P \a_y(P)\| dy 
= L^\cA_q(F) \breve\g^\cA  .
\end{align*}
In this way we see that $\g^\cA_q \leq \breve \g^\cA$, with
no factor of $2q$ needed.

We can apply Theorem \ref{thmmat1} 
to the situation considered before
Proposition \ref{progam} in which $G$ is a 
compact semisimple Lie group, $\l$ is
a positive integral weight, and 
$(\cH^m, U^m)$ is the irreducible representation of $G$
with highest weight $m\l$ for each positive $m$,
with $\cB^m = \cL(\cH^m)$. We can then form the bridge with conditional 
expectations, $\Pi_m = (\cD_m, \o^m,  E^\cA_m, E^\cB_m)$ that
is discussed there. For any positive integer $q$ we then have
the matricial version involving $M_q(\cA)$, $M_q(\cB_m)$,
and the corresponding bridge $\Pi^q_m$. On applying
Theorem \ref{thmmat1} together with the results mentioned
above about the convergence of the quantities 
$\breve\g^\cA_m, \g^\cB_m$, and $\tilde \d^\cB_m$ 
to 0, we obtain one of the two main
theorems of this paper:

\begin{thm}
\label{thmmat1q}
With notation as above, we have
\[
\mathrm{length}(\Pi^q_m) \leq 2q 
\max\{\breve\g^\cA_m, \g^\cB_m,  \min\{\d^\cB_m, \tilde \d^\cB_m\} \} 
\]
where $\breve\g^\cA_m$ is defined as in formula \ref{gama}, and
where $\tilde \d^\cB_m$ is defined as in Notation \ref{tdel}.
Consequently $\mathrm{length}(\Pi^q_m)$ converges
to 0 as $m$ goes to $\infty$, for each fixed $q$. 
\end{thm}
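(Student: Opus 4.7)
The plan is to recognize that Theorem \ref{thmmat1q} is essentially a combination of Theorem \ref{thmmat1} (the matricial length bound for a single bridge with conditional expectations in this setting) together with the three scalar convergence results about the constants $\breve\g^\cA_m$, $\g^{\cB^m}$, and $\tilde\d^{\cB^m}$ that have already been cited from Sections 10, 11, and 12 of \cite{R21}. So the proof has essentially two movements: apply the matricial bound with $q$ fixed but $m$ varying, then push $m \to \infty$ entrywise.

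First, I would verify that for every $m$ the hypotheses of Theorem \ref{thmmat1} are in force for the bridge $\Pi_m=(\cD_m,\o^m,E^\cA_m,E^{\cB^m}_m)$. All of this was set up in the discussion preceding the theorem: $\Pi_m$ is a bridge with conditional expectations (by the construction of Section \ref{brce} applied to $(\cH^m,U^m)$), $\{L^\cA_n\}$ and $\{L^{\cB^m}_n\}$ are matrix slip-norms obtained entrywise from $\l$ and $\a$ with the fixed length function $\ell$, and the $\l\otimes\a$-equivariance of $\Phi^\cA_m$ and $\Phi^{\cB^m}_m$, which carries over entrywise, gives admissibility for $\{L^\cA_n\}$ and $\{L^{\cB^m}_n\}$ (as noted right before the statement of Theorem \ref{thmmat1}). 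Thus Theorem \ref{thmmat1} applies and yields, for each $m$,
\[
\mathrm{length}(\Pi^q_m) \leq 2q \max\{\breve\g^\cA_m,\ \g^{\cB^m}_m,\ \min\{\d^{\cB^m}_m,\ \tilde\d^{\cB^m}_m\}\}.
\]

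For the convergence assertion I fix $q$ and examine each of the three quantities inside the max as $m\to\infty$. Section 10 of \cite{R21} gives $\breve\g^\cA_m \to 0$, Section 12 of \cite{R21} gives $\g^{\cB^m}_m \to 0$, and Section 11 of \cite{R21} (specifically theorem 11.5 there) gives $\tilde\d^{\cB^m}_m \to 0$. Since $\min\{\d^{\cB^m}_m,\tilde\d^{\cB^m}_m\} \leq \tilde\d^{\cB^m}_m$, the minimum also tends to $0$. The constant $2q$ is independent of $m$, so the max of three null sequences remains null, and $\mathrm{length}(\Pi^q_m) \to 0$.

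I do not anticipate a genuine obstacle here: the work has been packaged into Theorem \ref{thmmat1} and the external convergence lemmas from \cite{R21}. The only mildly delicate point is being careful that the admissibility of $\Pi_m$ for the matrix slip-norms is the entrywise lift of the scalar admissibility proved in Proposition \ref{adm1} (rather than something that must be re-derived for each $n$), and that the factor $2q$ in Theorem \ref{thmmat1} does not depend on $m$, so it does not interfere with the limit.
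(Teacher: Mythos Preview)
Your proposal is correct and follows essentially the same approach as the paper: apply Theorem \ref{thmmat1} to each $\Pi_m$ (after checking admissibility via equivariance), and then invoke the convergence results for $\breve\g^\cA_m$, $\g^{\cB^m}$, and $\tilde\d^{\cB^m}$ from Sections 10, 12, and 11 of \cite{R21}. The paper's argument is stated more tersely in the paragraph immediately preceding the theorem, but the logical content is identical.
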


We remark that because of the factor $q$ in the right-hand
side of the above bound for $\mathrm{length}(\Pi^q_m)$,
we do not obtain convergence to 0 that is uniform in $q$.
I do not have a counter-example to the convergence being
uniform in $q$, but it seems to me very possible that the convergence
will not be uniform.


\section{The application to the second class of basic examples}
\label{app2}

We now apply our general considerations to our second
basic class of examples, described in Section \ref{basic2}.
We use the notation of that section.
We also use much of the notation of Section \ref{app1}, but now
we have  two representations, $(\cH^m, U^m)$ and $(\cH^n, U^n)$
(where for the moment $m$ and $n$ are just labels). We have
corresponding C*-algebras $\cB^m$ and $\cB^n$, 
and projections $P^m$ and $P^n$.

We let $L^m$ be the Lip-norm defined on $\cB^m$ determined 
by the action $\a$ and the length function $\ell$ as in 
equation \ref{lipn}, and similarly for $L^n$ on $\cB^n$. In
terms of these Lip-norms the length of any bridge from
$\cB^m$ to $\cB^n$ is defined.

As in Section \ref{basic2} we consider the bridge $\Pi = (\cD, \o)$  for which
\[
\cD = \cB^m \otimes \cA \otimes \cB^n \ = \ C(G/H, \ \cB^m \otimes \cB^n)  ,
\]
and  the pivot, $\o$, in $C(G/H, \ \cB^m \otimes \cB^n)$, is defined by
\[
\o(x) = \a_x(P^m) \otimes \a_x(P^n)   .
\] 
We view $\cB^m$ as a subalgebra of $\cD$ by sending
$T \in \cB^m$ to $T \otimes 1_\cA \otimes 1_{\cB^n}$, and similarly for 
$\cB^n$. 

As seen in Section \ref{brce},
the tracial state $\t_\cA \otimes \t_n$ on $\cA \otimes \cB^n$
determines a canonical conditional
expectation, $E^m$, from $\cD$ onto $\cB^m$,
and the tracial state $\t_m \otimes \t_\cA$ determines
a canonical conditional expectation, $E^n$, from $\cD$ onto
$\cB^n$ .
We find that for any $F \in \cD$ we have
\[
E^m(F)   = \int_{G/H} (\iota_m 
\otimes \t_n)(F(x)) \ dx  ,
\]
and similarly for $E^n$, where here $\iota_m$ is the
identity map from $\cB^m$ to itself.
From this it is easily seen that 
\[
r_\o^{-1} = d_md_n
\]
where $d_m$ is the dimension of $\cH^m$ and similarly for $d_n$.
Thus $\Pi = \{\cD, \o, E^m, E^n\}$ is a bridge with conditional
expectations.

Then
\[
\Phi^m(F)(x)   
= r_\o^{-1}E^m(\o F\o)(x) = d_m d_n E^m(\o F\o)(x)  .
\]
But, if we set $\a_x(P^m \otimes P^n) = \a_x(P^m) \otimes \a_x(P^n)$, we have
\[
E^m(\o F\o)(x)   
= \int (\iota_m \otimes \t_n)(\a_x(P^m \otimes P^n)F(x)\a_x(P^m \otimes P^n))  dx  .
\]
In particular, for any $T \in \cB^n$ we have
\[
E^m(\o T\o)(x) = \int \a_x(P^m)\t_n(T\a_x(P^n)) dx ,
\]
and so since $d_n \t_{\cB^n}$
is the usual unnormalized trace $\rtr_n$ on $\cB^n$, we have
\[
\Phi^m(T) = d_m \int \a_x(P^m) \rtr_n(T\a_x(P^n)) dx  .
\]
This is essentially the formula obtained in Section \ref{brce}, and 
is exactly the second displayed formula in section 3 of \cite{R25}. 
Even more, with notation as in
Section \ref{app1}, especially the $\Phi^\cA$ of equation \eqref{phia},
 except for our different $\cB^m$
and $\cB^n$ etc, we see that we can write
\begin{equation}
\label{dphi}
\Phi^m(T) = \Phi^{\cB^m}(\Phi^\cA(T)) = \breve \s^m(\s^n_T)   .
\end{equation}
We have a similar equation for $\Phi^n(T)$, and we see
that we depend on the context to make clear on which
of the two algebras $\cB^m$ and
$\cB^n$ we consider $\Phi^\cA$ to be defined. 

As in the proof of Proposition \ref{adm1}, we can use
the fact that $E^m$ and $E^n$ are equivariant, where the
action of $G$ on $\cD$ is given by $\a \otimes \l \otimes \a$, 
to obtain:
\begin{prop}
\label{adm2}
The bridge with
conditional expectations $\Pi$ is admissible for $L^m$ and $L^n$.
\end{prop}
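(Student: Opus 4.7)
The plan is to follow the pattern of Proposition \ref{adm1}, exploiting equivariance of $\Phi^m$ and $\Phi^n$ with respect to the natural $G$-actions together with the fact that $L^m$ and $L^n$ are defined entirely via $\a$ and $\ell$ through formula \eqref{lipn}. Since $\Phi^m$ is unital and positive (unitality follows from $E^m(\o) = r_\o 1_{\cB^m}$, and positivity is immediate because $\o F \o \ge 0$ whenever $F \ge 0$), it has norm $1$; combined with equivariance this will force it to be contractive for the Lip-seminorms, and similarly for $\Phi^n$.

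I would equip $\cD = \cB^m \otimes \cA \otimes \cB^n$ with the diagonal action $\a \otimes \l \otimes \a$ of $G$ and verify two compatibilities. First, the pivot $\o$ is $G$-invariant: viewing $\o$ as the function $y \mapsto \a_y(P^m) \otimes \a_y(P^n)$ in $C(G/H, \cB^m \otimes \cB^n)$, the diagonal action takes its value at $y$ to
\[
(\a_x \otimes \a_x)(\o(x^{-1}y)) = \a_x(\a_{x^{-1}y}(P^m)) \otimes \a_x(\a_{x^{-1}y}(P^n)) = \o(y).
\]
Second, the conditional expectation $E^m = \iota_m \otimes \t_\cA \otimes \t_n$ is equivariant from the diagonal action on $\cD$ to $\a$ on $\cB^m$; this reduces to invariance of $\t_\cA \otimes \t_n$ under $\l \otimes \a$, which follows from translation invariance of $\t_\cA$ on $G/H$ together with $\t_n \circ \a_x = \t_n$ by uniqueness of the tracial state on $\cB^n$.

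Combining these, for any $F \in \cD$ and $x \in G$ the invariance of $\o$ gives $(\a \otimes \l \otimes \a)_x(\o F \o) = \o \cdot (\a \otimes \l \otimes \a)_x(F) \cdot \o$, and equivariance of $E^m$ then yields
\[
\Phi^m((\a \otimes \l \otimes \a)_x F) = \a_x(\Phi^m(F)).
\]
The embedding $\cB^n \hookrightarrow \cD$ by $S \mapsto 1_{\cB^m} \otimes 1_\cA \otimes S$ intertwines $\a$ on $\cB^n$ with the diagonal action on $\cD$, so $\Phi^m(\a_x S) = \a_x(\Phi^m(S))$ for $S \in \cB^n$, and hence
\[
\|\a_x(\Phi^m(S)) - \Phi^m(S)\| = \|\Phi^m(\a_x S - S)\| \le \|\a_x S - S\|.
\]
Dividing by $\ell(x)$ and taking the supremum over $x \ne e_G$ gives $L^m(\Phi^m(S)) \le L^n(S)$. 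The symmetric argument with $m$ and $n$ interchanged yields $L^n(\Phi^n(T)) \le L^m(T)$, completing admissibility. No substantive obstacle appears; the only care required is the bookkeeping across the three tensor factors of $\cD$ when tracking how the diagonal action interacts with $\o$ on the left and right of $F$.
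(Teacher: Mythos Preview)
Your proposal is correct and follows essentially the same approach as the paper: the paper's proof is the single sentence preceding the proposition, remarking that one argues as for Proposition~\ref{adm1} using the equivariance of $E^m$ and $E^n$ for the action $\a \otimes \l \otimes \a$ of $G$ on $\cD$. You have simply spelled out the details of that equivariance argument (invariance of $\o$, equivariance of $E^m$ via invariance of $\t_\cA$ and $\t_n$, and the resulting contractivity for the Lip-seminorms), all of which are correct.
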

The formula \eqref{dphi} suggests the following steps for obtaining a
bound on the reach of $\Pi$ in terms of the data of
the previous section. Let $S \in \cB^m$,
$f \in C(G/H)$, and $T \in \cB^n$. Then, for the
norm of $\cB^m \otimes \cB^m$ and for any $x \in G/H$, we have
\begin{align}
\|(S \o - \o & T)(x)\| 
 =\|(S\a_x(P^m))\otimes \a_x(P^n) - 
\a_x(P^m)\otimes (\a_x(P^n)T)\|   \nonumber  \\ 
& \leq \|(S\a_x(P^m))\otimes \a_x(P^n) - 
f(x)(\a_x(P^m)\otimes (\a_x(P^n))\|   \nonumber  \\
& \quad + \|f(x)(\a_x(P^m)\otimes (\a_x(P^n)) - 
\a_x(P^m)\otimes (\a_x(P^n)T) \|  \nonumber \\
& = \|S\a_x(P^m) - f(x)\a_x(P^m)\| +
 \|f(x)\a_x(P^n) - \a_x(P^n)T \|   .
 \label{bigineq}
\end{align}
Notice that the last two norms are in $\cB^m$ and
$\cB^n$ respectively.

We will also use the $\Phi^\cB$ of equation
\eqref{phib}, but now, to distinguish it from the $\Phi^m$
above, we indicate that it is defined on $\cA$ (and maps to $\cB^m$) by
writing $\Phi_\cA^{\cB^m}$.

For fixed $T \in \cB^n$ let us set
$f(x) = \Phi^\cA(T) = \rtr_n(\a_x(P^n)T)$, and then
let us set $S = \Phi_\cA^{\cB^m}(f) = d_m\int f(x)\a_x(P^m)$. 
Thus $S = \breve \s^m_f$ by equation \ref{brsig}.
When we substitute these into the inequality \eqref{bigineq},
we obtain
\begin{align*}
&\|(\Phi_\cA^{\cB^m}(f)\o - \o T)(x)\| \\ &\leq 
\|(\Phi_\cA^{\cB^m}(f)(x) - f(x))\a_x(P^m)\| +
 \|\a_x(P^n)(f(x) -T) \| 
\end{align*}
In view of the definition of $f$, we recognize
that the supremum over $x \in G/H$ of
the second term on the right of the inequality sign is
the kind of term involved in the supremum in the
right-hand side of equality \eqref{gamb}.
Consequently that second term above is no greater than
$\g^{\cB^n}L^n(T)$. 
To indicate that this comes from
equality \eqref{gamb} we write $\g^{\cB^n}_\cA$ instead
of just $\g^{\cB^n}$.
Because $\Phi_\cA^{\cB^m}(f) = \breve \s^m_f$,
we also recognize that the supremum over $x \in G/H$ of
the first term above on the right of the inequality sign is
exactly 
(after taking adjoints to get $P^m$ on the correct side)
 the left hand side of inequality \eqref{gamin},
where the $\o$ there is that of Section \ref{app1}.
Consequently that term is no greater than 
$\breve \g^\cA_m L^\cA(f)$, where the subscript
$m$ on $\breve \g^\cA_m$ indicates that
$P^m$ should be used in equation \eqref{gama}. 
But from the admissibility in Proposition \ref{adm1}
involving $\Phi_\cA^{\cB^m} = \Phi^{\cB^m}$ and $\Phi^\cA$ we have 
\[
L^m(S) = L^m(\Phi_\cA^{\cB^m}(f)) \leq L^\cA(f) \leq L^n(T) .
\]
Notice that it follows that if $T \in \cL^1_{\cB^n}$ then 
 $S \in \cL^1_{\cB^m}$.
Anyway, on taking the supremum over $x \in G/H$, we obtain
\[
\|\Phi_\cA^{\cB^m}(f)\o - \o T\|
\leq (\breve \g^\cA_m  \ + \ \g^{\cB^n}_\cA) L^n(T)  .
\] 
 We see in this way
that the distance from $\o T$ to $\cL^1_{\cB^m} \o$
    is no bigger than  $\breve \g^\cA_m +  \g^{\cB^n}_\cA$. 

The role of $\cA$ in Theorem \ref{thmadmis} is here being 
played by $\cB^m$. So to reduce confusion we will here write
$\g^m_n$ for the $\g^\cA$ of Theorem \ref{thmadmis}, showing
also the dependence on $n$. Thus by definition
\[
\g^m_n = \sup\{\|T\o - \o \Phi^n(T)\|: T \in \cL^1_{B^m}\}.
\]
We define $\g^n_m$ similarly. Then in terms of this notation,
what we have found above is that 
\[
\g^n_m \leq \breve \g^\cA_m  + \g^{\cB^n}_\cA
\]
We now indicate the dependence of $\Pi$ on $m$ and
$n$ by writing $\Pi_{m,n}$.
The situation just above
is essentially symmetric in $m$ and $n$, 
and so, on combining this with the first inequality
of Theorem \ref{thmadmis}, we obtain:
\begin{prop}
\label{prorch2}
With notation as above, we have
\[
\mathrm{reach}(\Pi_{m,n}) \leq \max\{\g^n_m, \g^m_n\} \leq
\max\{\breve \g^\cA_m +  
\g^{\cB^n}_\cA, \breve \g^\cA_n +  \g^{\cB^m}_\cA\},
\]
\end{prop}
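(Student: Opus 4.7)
The plan is straightforward because most of the work has already been done in the displayed computation immediately preceding the proposition. First I would invoke Proposition \ref{adm2}, which asserts that $\Pi_{m,n}$ is a bridge with conditional expectations that is admissible for $L^m$ and $L^n$. This is exactly the hypothesis needed to apply Theorem \ref{thmadmis}, from which the first inequality $\mathrm{reach}(\Pi_{m,n}) \leq \max\{\g^n_m, \g^m_n\}$ is immediate, once one matches notation: the $\g^\cA$ of Theorem \ref{thmadmis} is here $\g^m_n$ and the $\g^\cB$ there is here $\g^n_m$.

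For the second inequality, the computation preceding the proposition has already established that $\g^n_m \leq \breve \g^\cA_m + \g^{\cB^n}_\cA$. Explicitly, for $T \in \cL^1_{\cB^n}$ one takes $f = \Phi^\cA(T)$ and $S = \Phi_\cA^{\cB^m}(f)$, applies the triangle inequality \eqref{bigineq} to the factored pivot $\o(x) = \a_x(P^m) \otimes \a_x(P^n)$, and identifies the two resulting summands as instances of $\breve \g^\cA_m$ (via inequality \eqref{gamin}, with the subscript indicating that $P^m$ enters equation \eqref{gama}) and of $\g^{\cB^n}_\cA$ (via the equality \eqref{gamb} with $\cB^n$ in the role of $\cB$). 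The admissibility chain $L^m(S) \leq L^\cA(f) \leq L^n(T) \leq 1$, provided by Proposition \ref{adm1} applied to the pair $(\cA, \cB^m)$ and to $(\cA, \cB^n)$, guarantees that $S \in \cL^1_{\cB^m}$, so this bound on $\|S\o - \o T\|$ is a bound on the distance from $\o T$ to $\cL^1_{\cB^m}\o$.

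For the symmetric bound $\g^m_n \leq \breve \g^\cA_n + \g^{\cB^m}_\cA$, I would simply swap the roles of $m$ and $n$ throughout the above argument. Given $S \in \cL^1_{\cB^m}$, I would set $f = \Phi^\cA(S)$ (now the covariant symbol relative to $P^m$) and $T = \Phi_\cA^{\cB^n}(f)$. The pivot $\o(x) = \a_x(P^m) \otimes \a_x(P^n)$ is manifestly symmetric under the interchange of the two tensor factors, and every supporting identity in Sections \ref{basic2} and \ref{brce}, including the definitions of $E^m$ and $E^n$, is symmetric in $m$ and $n$. Running the same triangle inequality estimate therefore yields $\|S\o - \o T\| \leq \breve \g^\cA_n L^\cA(f) + \g^{\cB^m}_\cA L^m(S)$, and the same admissibility chain gives the required bound with constants swapped.

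The only genuinely delicate step, and where I would take most care, is the bookkeeping that attaches the correct decorations $m$ or $n$ to each of $\breve \g^\cA_{\bullet}$ and $\g^{\cB^\bullet}_\cA$: the first summand of the estimate, coming from an expression of the form $\|(\Phi_\cA^{\cB^k}(f)(x) - f(x))\a_x(P^k)\|$, contributes $\breve \g^\cA_k$, while the second summand, of the form $\|\a_x(P^\ell)(f(x) - R)\|$ with $R$ in $\cL^1_{\cB^\ell}$, contributes $\g^{\cB^\ell}_\cA$. Once one accepts that the two bounds hold and takes the maximum, the second inequality of the proposition follows.
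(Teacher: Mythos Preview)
Your proposal is correct and follows essentially the same approach as the paper: the first inequality is Theorem \ref{thmadmis} applied via the admissibility from Proposition \ref{adm2}, the bound $\g^n_m \leq \breve \g^\cA_m + \g^{\cB^n}_\cA$ is exactly the computation displayed before the proposition, and the companion bound comes from the symmetry in $m$ and $n$. Your bookkeeping of which decoration attaches to which constant is accurate.
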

As mentioned in the previous section, 
section 10 of \cite{R21} gives a proof 
that the sequence
$\breve \g^\cA_m$ converges to 0 as $m$ goes to $\infty$,
while section 12 of \cite{R21} gives a proof that the sequence
$\g^{\cB^m}_\cA$ converges to 0 as $m$ goes to $\infty$.
We thus see that we obtain: 

\begin{prop}
\label{progamn}
The reach of the bridge $\Pi_{m,n}$ goes to 0 as $m$ 
and $n$ go to $\infty$ simultaneously.
\end{prop}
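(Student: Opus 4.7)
The plan is to derive Proposition \ref{progamn} as an essentially immediate consequence of Proposition \ref{prorch2} combined with the two convergence results cited from \cite{R21}. First I would recall the upper bound
\[
\mathrm{reach}(\Pi_{m,n}) \ \leq \ \max\{\breve \g^\cA_m +
\g^{\cB^n}_\cA, \ \breve \g^\cA_n +  \g^{\cB^m}_\cA\}
\]
established in Proposition \ref{prorch2}. Since the conclusion we want concerns only the left-hand side, it suffices to show that each of the two expressions appearing inside this maximum tends to $0$ when both $m$ and $n$ tend to $\infty$.

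Next I would invoke the two ingredients from \cite{R21} that have already been quoted in the text preceding the statement. Section 10 of \cite{R21} shows that the sequence $\breve \g^\cA_m$ converges to $0$ as $m \to \infty$ (and symmetrically $\breve \g^\cA_n \to 0$ as $n \to \infty$). Section 12 of \cite{R21} shows that the sequence $\g^{\cB^m}_\cA$ converges to $0$ as $m \to \infty$ (and symmetrically for $\g^{\cB^n}_\cA$). These are precisely the four sequences appearing in the upper bound above.

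Putting these facts together, given $\e > 0$ we can choose $N$ so large that whenever $m, n \geq N$ each of $\breve \g^\cA_m$, $\breve \g^\cA_n$, $\g^{\cB^m}_\cA$, $\g^{\cB^n}_\cA$ is less than $\e/2$. Then both $\breve \g^\cA_m + \g^{\cB^n}_\cA$ and $\breve \g^\cA_n + \g^{\cB^m}_\cA$ are less than $\e$, and hence so is their maximum, which in turn bounds $\mathrm{reach}(\Pi_{m,n})$. This gives the desired simultaneous convergence to $0$.

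There is really no serious obstacle here, since all the analytical work has already been carried out, either in Proposition \ref{prorch2} of the present paper or in sections 10 and 12 of \cite{R21}. The only point worth a moment of care is the quantifier ``as $m$ and $n$ go to $\infty$ simultaneously'': one must notice that the right-hand side of the bound in Proposition \ref{prorch2} mixes the indices $m$ and $n$ across the two summands, so one needs both indices to be large simultaneously (rather than, for instance, only one of them), which is precisely what the statement claims.
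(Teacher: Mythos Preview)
Your proposal is correct and follows essentially the same approach as the paper: the paper also derives the proposition directly from the bound in Proposition \ref{prorch2} together with the convergence results from sections 10 and 12 of \cite{R21}, with the only difference being that you spell out the $\e$-$N$ argument while the paper leaves it implicit.
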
 

We now obtain an upper bound for the height of $\Pi_{m,n}$.
For this we will again use Proposition \ref{alth}. We
calculate as follows, using equation \eqref{dphi}. 
For $T \in \cB^n$ we have
\begin{align*}
\Phi^n(\Phi^m(T)) = \Phi^n(\breve \s^m(\s^n_T)) = 
\breve \s^n(\s^m(\breve \s^m(\s^n_T)))
\end{align*}
Thus
\begin{align}
\label{2term}
\|T &- \Phi^n(\Phi^m(T))\|  \\
&\leq  \|T - \breve \s^n(\s^n_T)\|
+ \|\breve \s^n(\s^n_T) - 
\breve \s^n ((\s^m\circ \breve \s^m)(\s^n_T)\| \nonumber  \\
&\leq \tilde \d^{\cB^n}_\cA L^{\cB^n}(T)
+\|\s^n_T - \s^m(\tilde \s^m(\s^n_T))\|   ,  \nonumber
\end{align}
where the first term of the last line comes from 
Notation \ref{tdel} and we write $\tilde \d^{\cB^n}_\cA$
for the $ \tilde \d^{\cB^n}$ there.
But $\s^n_T$ is just an element of $\cA$, and
in inequality 11.2 of \cite{R21} it is shown that
for any $f \in \cA$ we have
\[
\|f - \s^m(\breve \s^m(f)\| \leq \tilde\d^\cA_m L^\cA(f),
\]
where $\tilde\d^\cA_m$ is defined in equation 11.1 of \cite{R21}
by
\begin{equation}
\label{dela}
\tilde\d^\cA_m = 
\int_{G/H} \rho_{G/H}(e, x)d_m \rtr(P^m \a_x(P^m))\ dx  .
\end{equation}
(In equation 11.1 of \cite{R21} $\tilde\d^\cA_m$ is denoted
just by  $\d^\cA_m$. Also, $ \s^m \circ\breve \s^m$ is,
within our setting, the usual Berezin transform.)
Thus we see that the second
term of the last line of inequality \eqref{2term} is no bigger 
than $\tilde \d^\cA_m L^\cA(\s^n_T)$.
But $L^\cA(\s^n_T) \leq L^{\cB^n}(T)$.
From all of this we see that if $T \in \cL^1_{\cB^n}$ then
\[
\|T - \Phi^n(\Phi^m(T))\| \leq \tilde \d^{\cB^n}_\cA + \tilde\d^\cA_m  .
\]
Again, the role of $\cB$ in Notation \ref{bert} is being played
here by $\cB^n$, and so to reduce confusion we will here
write $\hat \d^n_m$ for the $\hat \d^\cB$ of Notation \ref{bert}.
We then see that for for our
present class of examples, that depend on $m$ and $n$, 
we have
\[
\hat \d^n_m \leq  \tilde \d^{\cB^n}_\cA + \tilde\d^\cA_m .
\]
The situation is essentially symmetric in $m$ and $n$, and so,
combining this with Propositions \ref{alth} and \ref{prorch2},
we obtain:
\begin{thm}
\label{thmht2}
With notation as above, we have
\[
\mathrm{height}(\Pi_{m,n}) \leq \max\{\hat \d^n_m, \hat \d^m_n\} 
\leq \max\{\tilde \d^{\cB^n}_\cA + \tilde\d^\cA_m, \
\tilde \d^{\cB^m}_\cA + \tilde\d^\cA_n \}   .
\]
Consequently
\[
\mathrm{length}(\Pi_{m,n}) \leq \max\{\breve \g^\cA_m +  
\g^{\cB^n}_\cA, \ \breve \g^\cA_n +  \g^{\cB^m}_\cA, \ 
\tilde \d^{\cB^n}_\cA + \tilde\d^\cA_m, \
\tilde \d^{\cB^m}_\cA + \tilde\d^\cA_n \}  .
\]
\end{thm}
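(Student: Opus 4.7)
The plan is to assemble the three ingredients that have already been developed in the paragraphs leading up to the theorem. The core analytical work (bounding $\hat\d^n_m$) has in effect been done; what remains is a clean combination.

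First I would invoke Proposition~\ref{alth} applied to the bridge with conditional expectations $\Pi_{m,n} = (\cD, \o, E^m, E^n)$, whose admissibility for $L^m$ and $L^n$ is Proposition~\ref{adm2}. Unlike in Section~\ref{app1}, neither $\cB^m$ nor $\cB^n$ lies in the center of $\cD$, so Proposition~\ref{abel} no longer kills one of the height terms; I would therefore take the $\hat\d$-form of the bound from Proposition~\ref{alth}, yielding
\[
\mathrm{height}(\Pi_{m,n}) \leq \max\{\hat\d^n_m, \hat\d^m_n\}.
\]

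Second, I would feed in the bound
\[
\hat\d^n_m \leq \tilde\d^{\cB^n}_\cA + \tilde\d^\cA_m
\]
that has been obtained just before the statement by factoring $\Phi^n\circ\Phi^m = \breve\s^n\circ\s^m\circ\breve\s^m\circ\s^n$ via equation~\eqref{dphi}, applying the triangle inequality to split into a term controlled by Notation~\ref{tdel} and a term controlled by inequality~11.2 of~\cite{R21} (using $L^\cA(\s^n_T) \leq L^{\cB^n}(T)$). The symmetric bound $\hat\d^m_n \leq \tilde\d^{\cB^m}_\cA + \tilde\d^\cA_n$ follows by interchanging the roles of $m$ and $n$ throughout; the entire construction of $\Pi_{m,n}$ is symmetric in these labels (both projections $P^m, P^n$ share the same stability subgroup $H$, and the pivot is symmetric in the tensor factors), so no new argument is needed. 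Combining these two inequalities gives the claimed height bound.

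Third, I would combine this height bound with the reach bound of Proposition~\ref{prorch2} through $\mathrm{length}(\Pi_{m,n}) = \max\{\mathrm{reach}(\Pi_{m,n}), \mathrm{height}(\Pi_{m,n})\}$ to obtain the final length bound. There is no genuine obstacle remaining: the substantive difficulty was recognizing that $\Phi^n\circ\Phi^m$ factors through the ordinary Berezin transform $\s^m\circ\breve\s^m$ on $\cA$, so that the convergence results from~\cite{R21} can be imported; once that observation is in hand, the theorem is a bookkeeping step gathering Propositions~\ref{alth}, \ref{adm2}, and~\ref{prorch2} together with the estimate on $\hat\d^n_m$.
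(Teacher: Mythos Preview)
Your proposal is correct and follows essentially the same route as the paper: the argument preceding the theorem establishes $\hat\d^n_m \leq \tilde\d^{\cB^n}_\cA + \tilde\d^\cA_m$ via the factorization~\eqref{dphi} and the triangle inequality, and the theorem is then obtained exactly as you describe by combining Proposition~\ref{alth} (taking the $\hat\d$ side of each minimum), the symmetry in $m$ and $n$, and Proposition~\ref{prorch2}.
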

As mentioned in the previous section,
theorem 11.5 of \cite{R21} gives a proof 
that the sequence
$\tilde \d^{\cB^m}_\cA$ (in our notation) 
converges to 0 as $m$ goes to $\infty$, while
theorem 3.4 of \cite{R7} shows that the sequence
$\d^\cA_m$ (where it was denoted by $\g_m$)
converges to 0 as $m$ goes to $\infty$.
Thus when we combine this with Proposition \ref{progamn}
we obtain:
\begin{thm}
\label{thmmn}
The height of the bridge $\Pi_{m,n}$ goes to 0 as $m$ 
and $n$ go to $\infty$ simultaneously. Consequently
the length of the bridge $\Pi_{m,n}$ goes to 0 as $m$ 
and $n$ go to $\infty$ simultaneously, and thus
$\mathrm{Prpq}((\cB^m, L^m), (\cB^n, L^n))$
goes to 0 as $m$ 
and $n$ go to $\infty$ simultaneously.
\end{thm}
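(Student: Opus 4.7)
The plan is to read off the conclusion directly from the quantitative bound in Theorem \ref{thmht2} by invoking four convergence facts that are already in the literature. Specifically, Theorem \ref{thmht2} bounds $\mathrm{height}(\Pi_{m,n})$ and $\mathrm{length}(\Pi_{m,n})$ by maxima of expressions of the form $\breve\g^\cA_m + \g^{\cB^n}_\cA$, $\breve\g^\cA_n + \g^{\cB^m}_\cA$, $\tilde\d^{\cB^n}_\cA + \tilde\d^\cA_m$, and $\tilde\d^{\cB^m}_\cA + \tilde\d^\cA_n$. Since each term appearing in these expressions depends on only one of the two indices $m, n$, it suffices to show that each of the four sequences
\[
\breve\g^\cA_m, \quad \g^{\cB^m}_\cA, \quad \tilde\d^{\cB^m}_\cA, \quad \tilde\d^\cA_m
\]
converges to $0$ as $m \to \infty$, and then to conclude that every term on the right-hand side converges to $0$ as $m$ and $n$ simultaneously go to infinity.

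Next I would cite the four results already recalled in the surrounding text: section 10 of \cite{R21} for $\breve\g^\cA_m \to 0$, section 12 of \cite{R21} for $\g^{\cB^m}_\cA \to 0$, theorem 11.5 of \cite{R21} for $\tilde\d^{\cB^m}_\cA \to 0$, and theorem 3.4 of \cite{R7} (where the quantity is denoted $\g_m$) for $\tilde\d^\cA_m \to 0$. Combining these with Theorem \ref{thmht2} then yields $\mathrm{height}(\Pi_{m,n}) \to 0$ and $\mathrm{length}(\Pi_{m,n}) \to 0$ simultaneously in $m, n$.

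Finally I would invoke the general fact, recalled just before Corollary \ref{corpr}, that the propinquity of \Lat \ between two compact quantum metric spaces is bounded above by the length of any admissible bridge between them. Since $\Pi_{m,n}$ is admissible for $L^m$ and $L^n$ by Proposition \ref{adm2}, this immediately gives
\[
\mathrm{Prpq}((\cB^m, L^m), (\cB^n, L^n)) \ \leq \ \mathrm{length}(\Pi_{m,n}),
\]
and hence the propinquity tends to $0$ as $m$ and $n$ go to infinity simultaneously, completing the proof.

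There is essentially no main obstacle in this argument: all the hard analytic work — both the construction of the bridge with conditional expectations and the explicit upper bounds for its reach and height in terms of Berezin-transform data — has already been carried out in Section \ref{brce} and in Theorem \ref{thmht2}, while the four convergence statements have been proved in \cite{R7} and \cite{R21}. The proof of Theorem \ref{thmmn} is therefore simply the assembly step of combining Theorem \ref{thmht2} with these prior convergence results and with the length-dominates-propinquity inequality.
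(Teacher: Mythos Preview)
Your proposal is correct and follows essentially the same approach as the paper: the paper's argument is precisely to combine the height bound of Theorem \ref{thmht2} with the convergence facts $\tilde\d^{\cB^m}_\cA \to 0$ (theorem 11.5 of \cite{R21}) and $\tilde\d^\cA_m \to 0$ (theorem 3.4 of \cite{R7}), and then to invoke Proposition \ref{progamn} for the reach, which in turn rests on the two $\g$-convergence facts you cite directly. One small remark: the inequality $\mathrm{Prpq} \leq \mathrm{length}(\Pi_{m,n})$ holds for any bridge in \Lat's sense and does not itself require admissibility; admissibility (Proposition \ref{adm2}) is what makes Theorem \ref{thmht2} applicable in the first place.
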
 

We now consider the matricial case. For any natural
number $q$ we apply the constructions
of Section \ref{matricial} to obtain the bridge with 
conditional expectations
\[
\Pi^q_{m,n} = (M_q(\cD), \o_q, E^m_q, E^n_q)
\]
from $M_q(\cB^m)$ to $M_q(\cB^n)$.
From this we then obtain the corresponding maps
$\Phi^m_q$ and $\Phi_q^n$.

We have the actions $\a^q$ of $G$ on $M_q(\cB^m)$ and 
$M_q(\cB^n)$, much as discussed after Theorem \ref{thmht}.
From these actions and the length function $\ell$ we obtain
the slip-norms $L^m_q$ and $L^n_q$. As $q$ varies, these
result in
matrix slip-norms. One shows that $\Pi^q_{m,n}$ is
admissible for $L^m_q$ and $L^n_q$ by arguing in much 
the same way as done after formula \eqref{dphi}. 

We are thus in a position to apply Theorem \ref{thmmatad},
as well as the convergence to 0 indicated above
for the various constants,
to obtain the second main
theorem of this paper:

\begin{thm}
\label{thmmat2q}
With notation as above, we have
\[
\mathrm{length}(\Pi^q_{m,n}) \leq 2q \max\{\breve \g^\cA_m +  
\g^{\cB^n}_\cA, \breve \g^\cA_n +  \g^{\cB^m}_\cA,  
\tilde \d^{\cB^n}_\cA + \tilde\d^\cA_m, \
\tilde \d^{\cB^m}_\cA + \tilde\d^\cA_n \} 
\]
where $\breve\g^\cA_m$ is defined as in formula \ref{gama} while
$\g^{\cB^m}_\cA$ is the $\g^{\cB^m}$ of equation \eqref{gamb}, 
and where $\tilde \d^{\cB^m}_\cA = \tilde \d^{\cB^m}$ 
is defined in Notation \ref{tdel}
while $\tilde\d^\cA_m$ is defined by equation \eqref{dela}, 
and similarly for $n$.
Consequently $\mathrm{length}(\Pi^q_{m,n})$ converges
to 0 as $m$ and $n$ go to $\infty$ simultaneously, 
for each fixed $q$. 
\end{thm}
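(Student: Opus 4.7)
The plan is to deduce Theorem \ref{thmmat2q} as a matricial analogue of Theorem \ref{thmht2}, by feeding the bounds established there into the general matricial machinery of Theorem \ref{thmmatad}. First I would verify that the hypotheses of Theorem \ref{thmmatad} apply to $\Pi^q_{m,n}$. Proposition \ref{promatx} already tells us that $\Pi^q_{m,n} = (M_q(\cD), \o_q, E^m_q, E^n_q)$ is a bridge with conditional expectations having the same constant $r_\o = (d_m d_n)^{-1}$ as $\Pi_{m,n}$. What remains is admissibility for the matrix slip-norms $\{L^m_q\}$ and $\{L^n_q\}$, which are defined entry-wise from $\a^q = \iota_q \otimes \a$ and $\ell$ as in Section \ref{app1}. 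Since $\Phi^m = r_\o^{-1} E^m(\o \cdot \o)$ and $\Phi^n$ are equivariant for the $G$-actions $\a \otimes \l \otimes \a$ on $\cD$ and $\a$ on the targets, the maps $\Phi^m_q = \iota_q \otimes \Phi^m$ and $\Phi^n_q = \iota_q \otimes \Phi^n$ are likewise equivariant for $\a^q$. Because the $L^m_q, L^n_q, L^\cA_q$ are all defined from the same length function $\ell$ via the formula \eqref{lipn} applied to these equivariant actions, admissibility transfers from the scalar case (Proposition \ref{adm2}) by the same argument used in Section \ref{app1} right after Theorem \ref{thmht}.

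With admissibility in hand, Theorem \ref{thmmatad} yields
\[
\mathrm{length}(\Pi^q_{m,n}) \leq 2q\max\{\g^n_m, \g^m_n, \min\{\d^m, \hat\d^m\}, \min\{\d^n, \hat\d^n\}\},
\]
where the four constants on the right are those associated to the bridge $\Pi_{m,n}$ at matrix-size $1$. Next I would simply import the estimates already proved in the non-matricial case. Proposition \ref{prorch2} gives $\g^n_m \leq \breve\g^\cA_m + \g^{\cB^n}_\cA$ and symmetrically $\g^m_n \leq \breve\g^\cA_n + \g^{\cB^m}_\cA$. Theorem \ref{thmht2} gives $\hat\d^n_m \leq \tilde\d^{\cB^n}_\cA + \tilde\d^\cA_m$ and symmetrically $\hat\d^m_n \leq \tilde\d^{\cB^m}_\cA + \tilde\d^\cA_n$. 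Replacing each $\min\{\d, \hat\d\}$ by $\hat\d$ (which only enlarges the bound) and substituting produces the explicit estimate in the statement.

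The second assertion, convergence to $0$ for fixed $q$, then reduces to the convergence of the four scalar quantities that appear in the bound. These are exactly the convergences invoked in the scalar case: $\breve\g^\cA_m \to 0$ by section 10 of \cite{R21}, $\g^{\cB^m}_\cA \to 0$ by section 12 of \cite{R21}, $\tilde\d^{\cB^m}_\cA \to 0$ by theorem 11.5 of \cite{R21}, and $\tilde\d^\cA_m \to 0$ by theorem 3.4 of \cite{R7}. Each appears with a subscript $m$ or $n$, so letting $m, n \to \infty$ simultaneously sends the whole maximum to $0$, and multiplication by the fixed factor $2q$ preserves this.

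The only genuinely delicate point is the admissibility verification: one must be sure that the entrywise definition of the matrix slip-norm interacts correctly with the entrywise action of $\Phi^m_q$ and $\Phi^n_q$, so that the inequality $L^m_q(\Phi^m_q(B)) \leq L^n_q(B)$ really does follow from the scalar bound $L^m(\Phi^m(b)) \leq L^n(b)$ of Proposition \ref{adm2}. Once one observes that $\a^q_x$ acts on $M_q(\cB^m) = M_q \otimes \cB^m$ trivially on the $M_q$ factor and as $\a_x$ on the $\cB^m$ factor, and likewise for $\cD$ and $\cB^n$, the equivariance of $\Phi^m_q$ for $\a^q$ is immediate, and the desired slip-norm inequality follows directly from the definition \eqref{lipn}. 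With that in place, the rest of the proof is genuinely just an assembly of previously established pieces.
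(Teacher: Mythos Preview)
Your proposal is correct and follows essentially the same route as the paper: establish that $\Pi^q_{m,n}$ is a bridge with conditional expectations via Proposition~\ref{promatx}, verify admissibility for the matrix slip-norms by the equivariance argument (as the paper says, ``by arguing in much the same way as done after formula~\eqref{dphi}''), apply Theorem~\ref{thmmatad}, and then insert the scalar-level bounds from Proposition~\ref{prorch2} and Theorem~\ref{thmht2} together with the cited convergence results. Your write-up is in fact somewhat more explicit than the paper's own brief paragraph preceding the theorem, but the argument is the same.
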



\section{Treks}
\label{trek}

\Lat \ defines his propinquity in terms of ``treks''. We will not give here the
precise definition (for which see definition 3.20 of \cite{Ltr2}), but the notion
is quite intuitive. A trek is a finite ``path'' of bridges, so that the ``range''
of the first bridge should be the ``domain'' of the second, etc. The length
of a trek is the sum of the lengths of the bridges in it. The propinquity between
two quantum compact metric spaces is the infimum of the lengths of
all the treks between them. \Lat \ shows in \cite{Ltr2} that propinquity
is a metric on the collection of isometric isomorphism classes of quantum
compact metric spaces. Notably, he proves the striking fact that 
if the propinquity between two quantum compact metric spaces is
0 then they are isometrically isomorphic.

There is an evident trek associated with our second class of examples.
In this section we will briefly examine this trek. Let the notation be as
in the early parts of the previous section. Thus we have $\cA = C(G/H)$,
and the operator algebras $\cB^m$ and $\cB^n$. In Section \ref{app1}
we have the bridge $\Pi_m = (\cA \otimes \cB^m, \o_m)$ from $\cA$ to
$\cB^m$, and the corresponding bridge $\Pi_n$ from $\cA$ to
$\cB^n$. But by reversing the roles of $\cA$ and $\cB^m$ we obtain
a bridge from $\cB^m$ to $\cA$. We do this by still 
viewing $\cA$ and $\cB^m$
as subalgebras of $\cD_m = C(G/H, \cB^m)$, but we now let $\cA$ 
act on the right of $\cD_m$ and we let $\cB^m$ act on the left. We will
denote this bridge by $\cD_m^{-1}$, which is consistent with the 
notation of \Lat \ at the beginning of the proof of proposition 4.7
of \cite{Ltr2}. Of course $\cD_m^{-1}$ has the ``same'' conditional
expectations $E^\cA$ and $E^{\cB^m}$ as those of $\Pi_m$.
We will write $E^\cA$ as $E^\cA_m$ to distinguish it from the
$E^\cA$ from $\cD_n$, which we will denote by $E^\cA_n$.
Then $\cD_m^{-1}$ is a bridge with conditional expectations,
which is easily seen to be admissible for $L^{\cB^m}$ and 
$L^\cA$. It is then easily seen that
\[
\mathrm{length}(\cD_m^{-1}) \ = \ \mathrm{length}(\cD_m)   .
\]
The pair $\G_{m,n} = (\cD_m^{-1}, \ \cD_n)$ then forms a trek
from $\cB^m$ to $\cB^n$, and
\[
\mathrm{length}(\G_{m,n}) \ = \ \mathrm{length}(\cD_m^{-1}) \ + \ 
\mathrm{length}(\cD_n)  .
\]
From Theorem \ref{thmprod} it follows that 
\begin{align*}
\mathrm{length}(\G_{m,n})
&\leq \ 
 \max\{\breve\g^\cA_m, \g^{\cB^m}, \min\{\d^{\cB^m}, \tilde \d^{\cB^m}\} \}   \\
&+ \  \max\{\breve\g^\cA_n, \g^{\cB^n}, \min\{\d^{\cB^n}, \tilde \d^{\cB^n}\} \}  .
\end{align*}
Note that $\tilde\d_m^\cA$ and $\tilde\d_n^\cA$ do not appear in the
above expression, in contrast to their appearance in the estimate
in Theorem \ref{thmht2} for $\mathrm{length}(\Pi_{m,n})$. This
opens the possibility that in some cases
$\mathrm{length}(\G_{m,n})$ gives a
smaller bound for Prpq($\cB^m, \cB^n$) than does
$\mathrm{length}(\Pi_{m,n})$, and, even more, that this might
give examples for which the lengths of certain multi-bridge
treks are strictly smaller that the lengths of any single-bridge
treks. But I have not tried to determine if this happens for
the examples in this paper.

We can view the situation slightly differently as follows. Although
\Lat \ does not mention it, it is natural to define the reach of a
trek as the sum of the reaches of the bridges it contains,
and similarly for the height of a trek. One could then
give a new definition of the length of a trek as simply the
max of its reach and height. This definition is no bigger
that the original definition, and might be smaller. I have not
examined how this might affect the arguments in \cite{Ltr2},
but I imagine that the effect would not be very significant. Anyway,
for the above examples we see from Proposition \ref{prore1}  that
we would have
\[
\mathrm{reach}(\G_{m,n}) \ \leq \ 
\max\{\breve \g^\cA_m, \g_\cA^{\cB^m}\} + 
\max\{\breve \g^\cA_n, \g_\cA^{\cB^n}\},
\]
so that the bound for $\mathrm{reach}(\Pi_{m,n})$ given in
Proposition \ref{prorch2} is no bigger than that above for
$\mathrm{reach}(\G_{m,n})$. But from Theorem \ref{thmprod}
we see that
\[
\mathrm{height}(\G_{m,n}) \ \leq \  \tilde \d^{\cB^m}_\cA
+ \tilde \d^{\cB^n}_\cA
\]
(where $\tilde \d^{\cB^m}_\cA = \tilde \d^{\cB^m}$), 
and this can clearly be less than the right-most bound
for $\mathrm{height}(\Pi_{m,n})$
given in Theorem \ref{thmht2}.




\def\dbar{\leavevmode\hbox to 0pt{\hskip.2ex \accent"16\hss}d}
\providecommand{\bysame}{\leavevmode\hbox to3em{\hrulefill}\thinspace}
\providecommand{\MR}{\relax\ifhmode\unskip\space\fi MR }
\providecommand{\MRhref}[2]{%
  \href{http://www.ams.org/mathscinet-getitem?mr=#1}{#2}
}
\providecommand{\href}[2]{#2}

\end{document}